\documentclass{article}

\usepackage{fullpage,amsthm,amsmath,amssymb,amsbsy,amsfonts}

\usepackage{graphicx}

\title{\bf Invariant convex bodies for strongly elliptic systems}

\author{\sc{Gershon Kresin$^a\!\!$}
 and \sc{Vladimir Maz'ya$^b$}
\\ \\
{\it{$^a$Department of Computer Science and Mathematics, Ariel University,
Ariel 40700, Israel}}\\
{\it{$^b$Department of Mathematical Sciences, University of Liverpool,
M$\&$O Building, Liverpool,}}\\ 
{\it{L69 3BX, UK; Department of Mathematics, Link\"oping University,SE-58183 Link\"oping,}}\\ 
{\it{{\hskip -145mm}Sweden}}
}
{ \date\ }

\numberwithin{equation}{section}
\newtheorem{lemma}{Lemma}
\newtheorem{theorem}{Theorem}
\newtheorem{proposition}[theorem]{Proposition}

\newcommand{\bs}{\boldsymbol}

\newcommand{\blankbox}{\hfill\qedsymbol\\}

\begin{document}
\maketitle
\large

{\bf Abstract.} We consider uniformly strongly elliptic systems of the second order with bounded coefficients. 
First, sufficient conditions for the invariance of convex bodies obtained for linear 
systems without zero order term in bounded domains and quasilinear systems of special form
in bounded and in a class of unbounded domains. 
These conditions are formulated 
in algebraic form. They describe relation between the geometry of the 
invariant convex body and the coefficients of the system. 
Next, necessary conditions, which are also sufficient, for 
the invariance of some convex bodies are found for elliptic homogeneous 
systems with constant coefficients in a half-space. 
The necessary conditions are derived by using a criterion on the 
invariance of convex bodies for normalized matrix-valued integral transforms
also obtained in the paper. In contrast with the previous studies of invariant sets
for elliptic systems no a priori restrictions on the coefficient matrices are imposed.
\\
\\
{\bf Keywords:} invariant convex bodies; strongly elliptic systems; 
normalized integral transforms
\\
\\
{\bf AMS Subject Classification:} Primary 35J47, 35B50; Secondary 44A05
\\
\\ 

\section{Main results and background}\label{CH_10I}

We consider linear systems of the form
\begin{equation} \label{S1}
{\mathfrak A}(x, D_x) \bs u=\sum ^n_{j,k=1}{\mathcal A}_{jk}(x){\partial ^2 \bs u
\over\partial x_j\partial x_k}+\sum ^n_{j=1}{\mathcal A}_{j}(x){\partial \bs u
\over\partial x_j}=\bs 0 
\end{equation}
and certain quasilinear systems of the second order. 
Here $D_x=(\partial /\partial x_1,\dots,\partial /\partial x_n)$,
$\bs u=(u_1,\dots,u_m)$, ${\mathcal A}_{jk}$ 
and ${\mathcal A}_{j}$  
are bounded real $(m\times m)$-matrix-valued functions in a proper subdomain 
$\Omega$ of the Euclidean space ${\mathbb R}^n$ with boundary $\partial \Omega$ 
and closure $\overline \Omega$. Without loss of generality we suppose that
${\mathcal A}_{jk}={\mathcal A}_{kj}$.
We assume that the operator ${\mathfrak A}(x, D_x)$ 
is uniformly strongly elliptic in $\Omega$, i.e. that the inequality
\begin{equation} \label{(0.2)}
\Biggl (\;\sum_{j,k=1}^{n} {\mathcal A} _{jk}(x)\sigma _j \sigma _k \bs\zeta , \bs\zeta \Biggr )
\geq \delta|\bs\sigma|^2|\bs\zeta|^2 
\end{equation}
holds with a positive constant $\delta$ for all vectors 
$\bs\sigma  =(\sigma _1,\dots, \sigma  _n)$, $\bs\zeta =(\zeta_1,\dots, \zeta _m )$ 
and points $x\in\Omega$. Here and henceforth by $|\cdot|$ and $(\cdot,\cdot)$ 
we denote the length of a vector and the inner product in the Euclidean space.

We are  interested in conditions for the invariance of sets for system (\ref{S1})
and some quasilinear systems.
We will not suppose beforehand that the principal 
part of a system under consideration 
satisfies structural restrictions such as scalarity or diagonality.

The notion of invariant set for parabolic and elliptic systems and the first 
results concerning these sets appeared in the paper by Weinberger \cite{WEIN}.
By definition, a set ${\mathcal S}\subset {\mathbb R}^m$ 
is called invariant for elliptic system of the second order in a domain
$\Omega$ if any continuous in $\overline \Omega$ and 
bounded classical solution $\bs u=(u_1,\dots,u_m)$ of this system belongs to 
${\mathcal S}$ under the assumption that $\bs u|_{\partial \Omega} \in {\mathcal S}$.
It is noted in \cite{WEIN} that the componentwise maximum principle and the 
classical maximum modulus principle for parabolic and elliptic systems can be 
interpreted as statements on the invariance of an orthant and a ball, respectively.

Henceforth by ${\mathfrak S}$ we denote the closure of an arbitrary convex 
proper subdomain of ${\mathbb R}^m$. For brevity we say that
${\mathfrak S}$ is a {\sl convex body}.
By $\partial^*{\mathfrak S}$ we mean the set of points $a \in 
\partial{\mathfrak S}$ for which there exists the unit 
outward normal $\bs \nu(a)$ to $\partial{\mathfrak S}$.
We use the notation ${\mathfrak N}_{\mathfrak S}=\{\bs \nu(a): 
a \in \partial^*{\mathfrak S} \}$. Here end in the sequel $^{t}\!{\mathcal A}$ stands for 
the transposed matrix of ${\mathcal A}$.

\smallskip
In section 2 we find the following sufficient condition for the 
invariance of convex bodies for system (\ref{S1}).  

\smallskip
\begin{theorem} \label{T_1} Let $\Omega$ be a bounded domain in ${\mathbb R}^n $.
Let ${\mathfrak S}$ be a convex body in ${\mathbb R}^m$ and let the coefficients of the system 
${\mathfrak A}(x, D_x)\bs u=\bs 0$ in $\Omega$ satisfy the equalities
\begin{equation} \label{(4.1A)}
^{t}\!\!{\mathcal A}_{jk}(x)\bs\nu =a _{jk}(x;\bs\nu)\bs\nu\;,\;\;\;\;\;\;
^{t}\!\!{\mathcal A}_{j}(x)\bs\nu =a _{j}(x;\bs\nu)\bs\nu
\end{equation}
for all $x\in \Omega$  and $\bs\nu\in{\mathfrak N}_{\mathfrak S}$ with 
$a_{jk}, a_j: \Omega\times {\mathfrak N}_{\mathfrak S}\to {\mathbb R}$, 
$1\leq j,k\leq n$.

Then ${\mathfrak S}$ is invariant for the system ${\mathfrak A}(x, D_x)\bs u=\bs 0$ in $\Omega$.
\end{theorem}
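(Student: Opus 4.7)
The strategy is to reduce the vector problem, by duality against each supporting normal of $\mathfrak{S}$, to a family of scalar elliptic inequalities without zero order term, and then invoke the classical weak maximum principle on the bounded domain $\Omega$.

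First I would record the convex-geometric fact that
\[
\mathfrak{S}=\bigcap_{\bs\nu\in{\mathfrak N}_{\mathfrak S}}\{\bs\zeta\in{\mathbb R}^m: (\bs\zeta,\bs\nu)\le h(\bs\nu)\},\qquad h(\bs\nu):=\sup_{a\in\mathfrak{S}}(a,\bs\nu),
\]
which holds because a closed convex body equals the intersection of its supporting half-spaces and because the set of boundary points with a unit outward normal is dense in $\partial\mathfrak{S}$, so restricting the intersection to $\bs\nu\in{\mathfrak N}_{\mathfrak S}$ does not enlarge the body. Thus it suffices to prove, for each fixed $\bs\nu\in{\mathfrak N}_{\mathfrak S}$, that the scalar function
\[
v_{\bs\nu}(x):=(\bs u(x),\bs\nu)-h(\bs\nu)
\]
is nonpositive in $\Omega$, under the hypothesis that $v_{\bs\nu}|_{\partial\Omega}\le 0$ (which follows from $\bs u|_{\partial\Omega}\in\mathfrak{S}$).

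Next I would take the inner product of the system (\ref{S1}) with $\bs\nu$ and transfer the matrices onto $\bs\nu$:
\[
0=({\mathfrak A}(x,D_x)\bs u,\bs\nu)=\sum_{j,k}\Bigl(\frac{\partial^2\bs u}{\partial x_j\partial x_k},\,{}^{t}\!{\mathcal A}_{jk}(x)\bs\nu\Bigr)+\sum_{j}\Bigl(\frac{\partial\bs u}{\partial x_j},\,{}^{t}\!{\mathcal A}_{j}(x)\bs\nu\Bigr).
\]
Inserting the algebraic identities (\ref{(4.1A)}) converts each inner product on the right into a multiple of $(\partial_\alpha\bs u,\bs\nu)=\partial_\alpha v_{\bs\nu}$, since $h(\bs\nu)$ does not depend on $x$. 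This yields the scalar linear equation
\[
L_{\bs\nu}v_{\bs\nu}:=\sum_{j,k=1}^{n}a_{jk}(x;\bs\nu)\,\partial^2_{jk}v_{\bs\nu}+\sum_{j=1}^{n}a_{j}(x;\bs\nu)\,\partial_{j}v_{\bs\nu}=0\quad\text{in }\Omega,
\]
with no zero order term. The coefficients $a_{jk}(x;\bs\nu)=({}^{t}\!{\mathcal A}_{jk}(x)\bs\nu,\bs\nu)$ and $a_j(x;\bs\nu)=({}^{t}\!{\mathcal A}_{j}(x)\bs\nu,\bs\nu)$ are bounded since the matrices are, and specializing the strong ellipticity hypothesis (\ref{(0.2)}) to $\bs\zeta=\bs\nu$ gives $\sum_{j,k}a_{jk}(x;\bs\nu)\sigma_j\sigma_k\ge\delta|\bs\sigma|^2$, so $L_{\bs\nu}$ is strictly elliptic.

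Finally, since $\Omega$ is bounded and $L_{\bs\nu}$ has no zero order coefficient, the classical weak maximum principle for scalar second-order elliptic operators yields $\sup_\Omega v_{\bs\nu}\le\sup_{\partial\Omega}v_{\bs\nu}\le 0$. Taking the intersection of these half-space conclusions over all $\bs\nu\in{\mathfrak N}_{\mathfrak S}$ gives $\bs u(x)\in\mathfrak{S}$ for every $x\in\Omega$. The main obstacle I anticipate is not the analytic maximum principle, which is standard, but the careful setup of the first step: the transpose identities (\ref{(4.1A)}) must be used in exactly the right direction so that the operator acting on $\bs u$ collapses to an operator acting on the scalar linear functional $(\bs u,\bs\nu)$, and one must verify that the geometric representation of $\mathfrak{S}$ via ${\mathfrak N}_{\mathfrak S}$ (not all of $\partial\mathfrak{S}$) is still faithful, which rests on the density of regular boundary points of a convex body.
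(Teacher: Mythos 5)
Your proposal is correct and follows essentially the same route as the paper's proof: pair the system with a fixed outward normal $\bs\nu$, use the transpose identities \eqref{(4.1A)} to collapse the vector system to a scalar uniformly elliptic equation without zero-order term for $(\bs u,\bs\nu)$ (shifted by a constant), invoke the classical weak maximum principle on the bounded domain, and conclude by representing $\mathfrak{S}$ as the intersection of its supporting half-spaces over regular boundary normals. The only cosmetic difference is that you index by $\bs\nu\in{\mathfrak N}_{\mathfrak S}$ with the support function $h(\bs\nu)$ while the paper indexes by $a\in\partial^*\mathfrak{S}$ and shifts by $\bs a$ (equivalent since $(\bs a,\bs\nu(a))=h(\bs\nu(a))$), and you sketch the density argument for the half-space representation where the paper simply cites Rockafellar, Theorem 18.8.
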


\smallskip
Quasilinear systems of the form
\begin{equation} \label{Q1}
{\mathfrak B}(x, D_x) \bs u=\sum ^n_{j,k=1}{\mathcal B}_{jk}(x, D_x \bs u)
{\partial ^2 \bs u \over\partial x_j\partial x_k}=\bs 0 
\end{equation}
in bounded and in a wide class of unbounded domains $\Omega $
are also considered in Section 2,
where $\bs u=(u_1,\dots, u_m)$, ${\mathcal B}_{jk}$ 
are bounded real $(m\times m)$-matrix-valued functions in $\Omega \times {\mathbb R}^{mn}$. 
Without loss of generality we suppose that
${\mathcal B}_{jk}={\mathcal B}_{kj}$.
We assume that the operator ${\mathfrak B}(x, D_x)$ 
is uniformly strongly elliptic in $\Omega$, i.e. that the inequality
\begin{equation} \label{(Q2)}
\Biggl (\;\sum_{j,k=1}^{n} {\mathcal B} _{jk}(x, \bs\eta )
\sigma _j \sigma _k \bs\zeta , \bs\zeta \Biggr )
\geq \delta|\bs\sigma|^2|\bs\zeta|^2 
\end{equation}
holds with a positive constant $\delta$ for all vectors 
$\bs\eta \in {\mathbb R}^{mn}, \bs\sigma \in {\mathbb R}^n$, $\bs\zeta \in {\mathbb R}^m$ 
and points $x\in\Omega$.

In the next assertion we describe a sufficient condition for the invariance of convex bodies 
for system (\ref{Q1}).

\begin{theorem} \label{T_2} Let $\Omega \subset {\mathbb R}^n$ be 
{\rm (i)} a bounded domain or 
{\rm (ii)} an unbounded domain such that the cone 
$$
K_h=\left \{ x\in {\mathbb R}^n:\; x_n^2> h^2 \sum_{i=1}^{n-1}x^2_i\;,\;x_n<0 \right \},
\;\;\;h>1,
$$ 
belongs to the complement of $\Omega$.

Let ${\mathfrak S}$ be a convex body in ${\mathbb R}^m$ and let the coefficients of the system 
${\mathfrak B}(x, D_x)\bs u=\bs 0$ in $\Omega$ satisfy the equalities
\begin{equation} \label{(Q3)}
^{t}{\mathcal B}_{jk}(x, \bs \eta)\bs\nu =b _{jk}(x, \bs \eta;\bs\nu)\bs\nu
\end{equation}
for all $x\in \Omega$, $\bs \eta \in {\mathbb R}^{mn}$ and 
$\bs\nu\in{\mathfrak N}_{\mathfrak S}$ with 
$b_{jk}: \Omega\times {\mathbb R}^{mn} \times {\mathfrak N}_{\mathfrak S}\to {\mathbb R}$, 
$1\leq j,k\leq n$.

Then ${\mathfrak S}$ is invariant for the system ${\mathfrak B}(x, D_x)\bs u=\bs 0$ 
in $\Omega$.
\end{theorem}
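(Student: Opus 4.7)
The plan is to reduce Theorem~\ref{T_2} to a scalar maximum-principle problem via duality with the support function of $\mathfrak{S}$, then to dispatch cases (i) and (ii) by the classical weak maximum principle and a Phragm\'en--Lindel\"of argument, respectively. Since $\mathfrak{S}$ is a closed convex proper subdomain of $\mathbb{R}^m$, it is the intersection
\[
\mathfrak{S}=\bigcap_{\bs\nu\in\mathfrak{N}_{\mathfrak{S}}}\bigl\{\bs y\in\mathbb{R}^m:(\bs y,\bs\nu)\leq h_{\mathfrak{S}}(\bs\nu)\bigr\},
\qquad h_{\mathfrak{S}}(\bs\nu):=\sup_{\bs y\in\mathfrak{S}}(\bs y,\bs\nu),
\]
so for a bounded classical solution $\bs u$ of \eqref{Q1} continuous on $\overline\Omega$ with $\bs u|_{\partial\Omega}\in\mathfrak{S}$, it suffices to prove, for every fixed $\bs\nu\in\mathfrak{N}_{\mathfrak{S}}$, that the scalar function $f(x):=(\bs u(x),\bs\nu)$ satisfies $f(x)\leq h_{\mathfrak{S}}(\bs\nu)$ throughout $\Omega$.

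To produce the scalar equation, I would take the inner product of \eqref{Q1} with the constant vector $\bs\nu$, use the adjoint identity $(\mathcal{B}_{jk}\partial_{jk}^{2}\bs u,\bs\nu)=(\partial_{jk}^{2}\bs u,{}^{t}\!\mathcal{B}_{jk}\bs\nu)$, and apply hypothesis \eqref{(Q3)} to obtain
\[
\sum_{j,k=1}^n \tilde b_{jk}(x)\,\partial_{jk}^{2}f(x)=0\quad\text{in }\Omega,\qquad \tilde b_{jk}(x):=b_{jk}\bigl(x,D_x\bs u(x);\bs\nu\bigr).
\]
Substituting $\bs\zeta=\bs\nu$ in \eqref{(Q2)} and using \eqref{(Q3)} once more yields $|\bs\nu|^{2}\sum\tilde b_{jk}\sigma_j\sigma_k\geq\delta|\bs\sigma|^{2}|\bs\nu|^{2}$, so the scalar operator is uniformly elliptic with the same constant $\delta$; the assumed boundedness of $\mathcal{B}_{jk}$ on $\Omega\times\mathbb{R}^{mn}$ gives $\tilde b_{jk}\in L^{\infty}(\Omega)$. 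Thus $f$ is a bounded classical solution of a scalar linear uniformly elliptic equation with bounded measurable coefficients, and $f\leq h_{\mathfrak{S}}(\bs\nu)$ on $\partial\Omega$ by the very definition of $h_{\mathfrak{S}}$.

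In case~(i) the standard weak maximum principle for such equations on bounded domains immediately yields $f\leq h_{\mathfrak{S}}(\bs\nu)$ on $\Omega$; intersecting over all $\bs\nu\in\mathfrak{N}_{\mathfrak{S}}$ then gives $\bs u(\Omega)\subset\mathfrak{S}$.

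The main obstacle is case~(ii), where one needs a Phragm\'en--Lindel\"of-type principle: a bounded solution of $\sum\tilde b_{jk}\partial_{jk}^{2}f=0$ on an unbounded domain whose complement contains the cone $K_h$ with $h>1$ must satisfy $\sup_{\Omega}f=\sup_{\partial\Omega}f$. I would establish this by constructing an explicit nonnegative supersolution (barrier) $w$ on $\mathbb{R}^n\setminus K_h$ of the form $w(x)=|x|^{\alpha}\varphi(x/|x|)$, with $\varphi$ smooth and positive on $S^{n-1}\setminus\overline{K_h}$ and $\alpha>0$ small, the assumption $h>1$ being used to produce such $\varphi$ and $\alpha$ uniformly in the admissible class of coefficients via a Dirichlet-eigenvalue estimate on the spherical cap. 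With $w$ in hand, applying the weak maximum principle to $f-h_{\mathfrak{S}}(\bs\nu)-\varepsilon w$ on $\Omega\cap B_R$ (the boundary values being $\leq 0$ on $\partial\Omega$ and made $\leq 0$ on $\Omega\cap\partial B_R$ for $R$ large by the growth of $w$), then sending $R\to\infty$ and $\varepsilon\to 0$, produces $f\leq h_{\mathfrak{S}}(\bs\nu)$ on $\Omega$; intersecting over $\bs\nu\in\mathfrak{N}_{\mathfrak{S}}$ completes the proof.
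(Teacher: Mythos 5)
Your reduction to a scalar inequality via the support function, the derivation of the scalar equation by pairing with $\bs\nu$ and invoking (\ref{(Q3)}) and (\ref{(Q2)}), and the disposal of case (i) by the weak maximum principle on a bounded domain, all match the paper's proof (which runs this through Theorem \ref{T_1} with $\mathcal{A}_1=\cdots=\mathcal{A}_n=0$, after freezing the coefficients along the given solution $\bs v$ so that $\mathcal{A}_{jk}(x)=\mathcal{B}_{jk}(x,D_x\bs v_a(x))$).

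Where you depart from the paper is in case (ii). The paper does not construct anything; it quotes Landis's Phragm\'en--Lindel\"of theorem (Lemma \ref{PhL}, from \cite{LAN}, Theorem 6.3), applies it to $u_a-\sup_{\partial\Omega}u_a$, and reads off the maximum principle (\ref{(MP)}) directly from the dichotomy there (a bounded solution cannot satisfy alternative (b)). You instead try to re-prove this by building an explicit barrier. That is a legitimate aim, but your sketch has two concrete gaps. First, the scalar operator you obtain has \emph{variable, merely bounded} coefficients $\tilde b_{jk}(x)=b_{jk}(x,D_x\bs u(x);\bs\nu)$ in non-divergence form; a barrier $w$ must therefore be a supersolution \emph{simultaneously} for every matrix $(a_{jk})$ in the class $\delta I\le (a_{jk})\le\Lambda I$, i.e.\ a supersolution for the Pucci maximal operator. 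The phrase ``Dirichlet-eigenvalue estimate on the spherical cap'' is a Laplacian/divergence-form device; for $w=|x|^\alpha\varphi(x/|x|)$ the angular factor would have to be chosen against the Pucci operator (or one argues via the growth lemma as Landis does), and it is not automatic that the resulting exponent $\alpha$ is positive and uniform without an explicit estimate in terms of $\delta$, $\Lambda$, $n$, $h$. Second, you take $\varphi$ ``positive on $S^{n-1}\setminus\overline{K_h}$,'' but that open set permits $\varphi$ to vanish on $\partial K_h\cap S^{n-1}$; since $\Omega$ may approach $\partial K_h$ arbitrarily closely, $w$ would then fail to be bounded below on $\Omega\cap\partial B_R$, and the step ``made $\le 0$ on $\Omega\cap\partial B_R$ for $R$ large by the growth of $w$'' does not close. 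The standard fix is to take $\varphi$ associated to a slightly wider cap, i.e.\ to $K_{h'}$ with $h'>h$, so that $\varphi$ is bounded below on the compact set $\overline{S^{n-1}\setminus K_h}$; this still uses $h>1$, but the choice of $h'$ must be made explicit. Either carry out the Pucci barrier construction with these two corrections, or simply cite the Phragm\'en--Lindel\"of result as the paper does.
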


\smallskip
In section 3 we explore the structure of an $(m\times m)$-matrix ${\mathcal A}$ 
satisfying condition 
\begin{equation} \label{(Q4)}
^t\!{\mathcal A}\bs\nu=a(\bs\nu)\bs\nu
\end{equation}
for any $\bs\nu \in {\mathfrak N}_{\mathfrak S}$,  where ${\mathfrak S}$ is a convex polyhedral angle, 
a cylindrical or conical body
and $a$ is a scalar function on ${\mathfrak N}_{\mathfrak S}$. 

For instance, we show that the matrix ${\mathcal A}$ is scalar if ${\mathfrak S}$ is a convex polyhedral 
cone with $p$ facets, $p>m$, a convex cone with smooth guide and convex compact body with smooth boundary.

In other case, it is shown that if ${\mathfrak S}$ is a convex polyhedral cone with $m$ facets then 
the matrix ${\mathcal A}$ is represented in the form
$$
{\mathcal A}=\big (\;^{t}[\bs\nu_1,\dots, \bs\nu_m] \big)^{-1}\;
{\mathcal D}\;^{t}[\bs\nu_1,\dots, \bs\nu_m],
$$
where ${\mathcal D}$ is a diagonal $(m\times m)$-matrix, $\bs\nu_k$ is the unit outward normal to 
$k$-th facet of the polyhedral cone and $[\bs\nu_1,\dots, \bs\nu_m]$ means the $(m\times m)$-matrix 
whose columns are $\bs\nu_1,\dots, \bs\nu_m$. In particular, if ${\mathfrak S}$ is the first orthant
${\bf R}^m_+=\{u=(u_1,\dots,u_m): u_1\geq 0,\dots,u_m \geq 0 \}$ then ${\mathcal A}$ is diagonal.

At the end of section 3 we give examples of matrices ${\mathcal A}$ satisfying condition (\ref{(Q4)})
for certain three-dimensional convex bodies.

\smallskip
The results of auxiliary section 4 are used in  section 5.
In section 4 we consider the matrix-valued integral transform 
\begin{equation} \label{EQ_101}
(T\bs u)(x)=\int_Y {\mathcal K}(x, y)\bs u(y)d\mu_x(y),
\end{equation}
where $x$ is an element of a point set $X$, $\mu_x$ is a depending on $x\in X$ finite positive 
regular Borel measure on the Borel $\sigma$-algebra 
of  locally compact Hausdorff space $Y$, $\bs u$ is a real vector-valued function 
with $m$ components which are Borel and bounded on $Y$, kernel 
${\mathcal K}(x, \cdot)$ of the transform is a real $(m\times m)$-matrix-valued 
function with Borel and bounded elements on $Y$ for any $x\in X$. We suppose that 
${\mathcal K}$ is normalized by the condition
\begin{equation} \label{EQ_102}
\int_Y {\mathcal K}(x, y)d\mu_x(y)=I
\end{equation}
for any $x\in X$, where $I$ is the identity $(m\times m)$-matrix.

We say that ${\mathfrak S}$ is invariant for the integral transform 
(\ref{EQ_101}) if $(T\bs u)(x) \in {\mathfrak S}$ for all $x\in X$ and for any bounded and 
Borel real $m$-component vector-valued function $\bs u$ which takes values in ${\mathfrak S}$.

As a simple example of the integral transform for which any interval $[\alpha , \beta]$ 
is invariant, we mention
$$
(Su)(x)=\left (\int_a^b s(x,y)dy\right )^{-1}{\int_a^b s(x,y)u(y)dy},
$$
where $s(x, y)$ is continuous and positive function on the bounded set $[c, d]\times[a, b]$,
$u$ is a continuous function on $[a, b]$. 

Another example of integral transform for which 
any interval $[\alpha , \beta]$ is invariant, is the double layer potential 
$$
(L \varphi)(x)={1\over \omega_n}\int_{{\mathbb R}^n} \varphi(y)\omega_{_D}(x, dy),
$$
where $\omega_n$ is the area of the unit sphere in ${\mathbb R}^n$, $D$ is an arbitrary convex 
bounded domain in ${\mathbb R}^n$, $n\geq 2$, $x\in D$, 
$\varphi$ belongs to the set of continuous functions on ${\mathbb R}^n$ with compact 
support, and 
$$
\omega_{_D}(x, B)=\int_{B\cap \partial D}{(\bs \nu_y, y-x) \over |y-x|^n}\;d\sigma_y
$$ 
is the solid angle at which the intersection of Borel set 
$B\subset {\mathbb R}^n$ and the  boundary $\partial D$ of $D$
is seen from the point $x$. Here $\bs \nu_y$ is the outward unit normal to $\partial D$ 
at the point $y$ (see Burago and Maz'ya \cite{BM}).

\smallskip
In  section 4 we obtain the following necessary and sufficient condition 
on the matrix-valued kernel ${\mathcal K}$ for which ${\mathfrak S}$ is 
invariant for the transform $T$.

\setcounter{theorem}{0}
\begin{proposition} \label{P_1}
A convex body ${\mathfrak S}$ is invariant for transform $(\ref{EQ_101})$ normalized by 
$(\ref{EQ_102})$ if and only if there exists a bounded non-negative function 
$g: X\times Y\times {\mathfrak N}_{\mathfrak S}\rightarrow {\mathbb R}$ such that
\begin{equation} \label{EQ_3}
^{t}{\mathcal K}(x, y)\bs \nu=g(x, y; \bs \nu)\bs \nu
\end{equation}
for almost all $y\in Y$. 
\end{proposition}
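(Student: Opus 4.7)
The plan is to view $\mathfrak S$ through its supporting hyperplanes. With the support function $h(\bs\nu)=\sup_{\bs\zeta\in\mathfrak S}(\bs\zeta,\bs\nu)$, one has $\mathfrak S=\bigcap_{\bs\nu\in\mathfrak N_{\mathfrak S}}\{\bs\zeta:(\bs\zeta,\bs\nu)\le h(\bs\nu)\}$, so invariance of $\mathfrak S$ under $T$ is equivalent to the scalar inequality $((T\bs u)(x),\bs\nu)\le h(\bs\nu)$ holding for every $x\in X$, every $\bs\nu\in\mathfrak N_{\mathfrak S}$, and every admissible $\bs u$.

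For sufficiency the bound is immediate from (\ref{EQ_3}): moving $^{t}{\mathcal K}$ to the second slot and using (\ref{EQ_3}) gives
$$
((T\bs u)(x),\bs\nu)=\int_Y g(x,y;\bs\nu)(\bs u(y),\bs\nu)\,d\mu_x(y)\le h(\bs\nu)\int_Y g(x,y;\bs\nu)\,d\mu_x(y),
$$
and transposing the normalization (\ref{EQ_102}) and applying it to $\bs\nu$ forces $\int_Y g(x,y;\bs\nu)\,d\mu_x(y)=1$.

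For necessity, fix $x\in X$ and $\bs\nu\in\mathfrak N_{\mathfrak S}$, and choose $a\in\partial^*\mathfrak S$ with outward normal $\bs\nu$, so that $h(\bs\nu)=(a,\bs\nu)$. The constant function $\bs u\equiv a$ is admissible, and by (\ref{EQ_102}) we have $(Ta)(x)=a$, so the inequality $((Ta)(x),\bs\nu)\le h(\bs\nu)$ is an equality. I then perturb: for a vector $\bs v$ with $(\bs v,\bs\nu)<0$ and a Borel set $E\subset Y$, the function $\bs u_\varepsilon(y)=a+\varepsilon\chi_E(y)\bs v$ takes values in $\mathfrak S$ once $\varepsilon>0$ is small enough, because the tangent cone to $\mathfrak S$ at $a\in\partial^*\mathfrak S$ equals the closed half-space $\{\bs v:(\bs v,\bs\nu)\le 0\}$ (the polar of the one-dimensional normal cone at $a$), whose interior is $\{(\bs v,\bs\nu)<0\}$. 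Applying the invariance to $\bs u_\varepsilon$, subtracting the equality for $a$, and dividing by $\varepsilon$ yields
$$
\Bigl(\bs v,\int_E{}^{t}{\mathcal K}(x,y)\bs\nu\,d\mu_x(y)\Bigr)\le 0
$$
for every such $\bs v$; by continuity in $\bs v$ the inequality extends to the closed half-space, so $\bs F(E):=\int_E{}^{t}{\mathcal K}(x,y)\bs\nu\,d\mu_x(y)$ lies in the polar cone $\{\lambda\bs\nu:\lambda\ge 0\}$ for every Borel $E$.

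The last step is to pass from this set-indexed identity to a pointwise one. Splitting $^{t}{\mathcal K}(x,y)\bs\nu$ into its component along $\bs\nu$ and its component in $\bs\nu^\perp$, the fact that $\bs F(E)\in\mathbb R_+\bs\nu$ for every Borel $E$ says that the $\bs\nu^\perp$-components integrate to zero over every $E$ and hence vanish $\mu_x$-a.e., while the $\bs\nu$-component has non-negative integral over every $E$ and therefore equals $\mu_x$-a.e.\ a non-negative scalar $g(x,y;\bs\nu)$; boundedness of $g$ is inherited from the bounded entries of $\mathcal K$. I expect the only subtle point to be the short convex-geometric step identifying the tangent cone at $a\in\partial^*\mathfrak S$ with the half-space $\{(\bs v,\bs\nu)\le 0\}$, which is what guarantees that the perturbations $a+\varepsilon\chi_E(y)\bs v$ really lie in $\mathfrak S$ uniformly in $y$ for small $\varepsilon$.
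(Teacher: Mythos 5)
Your proof is correct, and the necessity direction takes a genuinely different route from the paper's. The paper fixes the decomposition $^{t}{\mathcal K}(x,y)\bs\nu=g\bs\nu+\bs f$ with $\bs f\perp\bs\nu$ up front, assumes $\bs f\neq\bs 0$ on a set $\mathcal M$ of positive measure, and then builds a single cleverly calibrated test function $\bs u(y)=\bs a+\alpha\bs f(x,y;\bs\nu)-\beta\bs\nu$, where $\beta=\max\{F(\xi'):|\xi'|=\alpha\lambda\}$ is chosen from the graph $F$ of $\partial\mathfrak S$ near $a$ and $\lambda=\sup_y|\bs f|$. Admissibility is checked through the convexity of $F$, and the contradiction comes from $\beta/\alpha\to 0$ (differentiability of $F$ at $a$, i.e.\ exactly $a\in\partial^*\mathfrak S$) versus the fixed positive quantity $\int_{\mathcal M}|\bs f|^2\,d\mu_x$. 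A separate second test function then handles the sign of $g$. You instead perturb by $\bs u_\varepsilon=\bs a+\varepsilon\chi_E\bs v$ for an arbitrary strictly inward direction $\bs v$ and arbitrary Borel $E$, and let $\varepsilon\to 0$; admissibility again ultimately rests on differentiability at $a$, but the resulting inequality $(\bs v,\bs F(E))\le 0$ feeds into a short polar-cone computation giving $\bs F(E)\in\mathbb R_+\bs\nu$ for every $E$, after which the vanishing of the tangential part and the sign of the normal part both follow at once from the standard ``integral over every $E$'' argument. Your version is arguably more transparent: it avoids the ad hoc calibration $\beta(\alpha)$, does not need $\lambda=\sup_y|\bs f|$, and disposes of the orthogonality and the non-negativity of $g$ in a single step rather than two. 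The sufficiency parts are essentially identical up to whether one subtracts $\bs a$ before integrating (the paper) or uses $\int_Y g\,d\mu_x=1$ from the transposed normalization (you); both are immediate. One small presentational caveat: quoting ``the tangent cone is the half-space'' alone is not enough to conclude $a+\varepsilon\bs v\in\mathfrak S$ for small $\varepsilon$ — that fails for boundary directions $(\bs v,\bs\nu)=0$, as a parabola shows — so it is worth stating explicitly, as you do implicitly, that you only use $\bs v$ in the open half-space, where differentiability of the local graph $F$ at $a$ gives $F(\varepsilon\bs v')=o(\varepsilon)$ against the linear inward drift $\varepsilon v_m$.
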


\smallskip
In section 5 we consider a strongly elliptic system of the form
\begin{equation} \label{sys}
{\mathfrak A}_0({D_x})\bs u=\sum_{j,k=1}^n {\mathcal A}_{jk}
{\partial ^2 \bs u\over \partial x_j \partial x_k}=\bs 0
\end{equation}
in the half-space ${\mathbb R}^n _+$, 
where ${\mathcal A}_{jk}={\mathcal A}_{kj}$ are real constant $(m\times m)$-matrices.
For this system we obtain the following two criteria for the invariance of 
some convex bodies, where the matrix $A$ is not necessarily symmetric.

\setcounter{theorem}{2}
\begin{theorem} \label{T_3} An orthant ${\mathbf R}^m_+(\alpha_1,\dots\alpha_m)=
\{u=(u_1,\dots , u_m): u_1\geq \alpha_1,\dots , u_m\geq \alpha_m \}$  
in ${\mathbb R}^m$ is invariant for the system ${\mathfrak A}_0({D_x})\bs u=\bs 0$ in 
${\mathbb R}^n_+$ if and only if 
\begin{equation} \label{diag1}
{\mathfrak A}_0({D_x})=A\;{\rm diag}\{L_1({D_x}),\dots, L_m({D_x}) \}\;,
\end{equation}
where 
$$ 
L_i({D_x})=\sum_{j,k=1}^n a_{jk}^{(i)}{\partial ^2 \over \partial x_j \partial x_k}\;,
\;\;\;\;\;\;i=1,\dots,m,
$$
are scalar elliptic operators and $A$ is a non-degenerate $(m\times m)$-matrix 
such that operator $(\ref{diag1})$ is strongly elliptic.
\end{theorem}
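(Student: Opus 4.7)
The plan is to prove the two implications separately, using Proposition~\ref{P_1} for the necessity direction.

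For sufficiency, assume ${\mathfrak A}_0(D_x) = A\,\mathrm{diag}\{L_1(D_x),\dots,L_m(D_x)\}$ with $A$ non-degenerate and $L_i$ scalar elliptic. Left-multiplying by $A^{-1}$ decouples the system into scalar equations $L_i u_i = 0$; testing the strong ellipticity inequality (\ref{(0.2)}) with $\bs\zeta=\bs e_i$ shows that each $L_i$ has a definite sign (absorbable into the $i$-th column of $A$), so $L_i$ is a constant-coefficient strongly elliptic scalar operator. The classical Phragm\'en--Lindel\"of maximum principle for bounded solutions in a half-space (which reduces, via an affine change of variables, to the positivity of the Poisson kernel of the Laplacian) then yields $u_i\geq\alpha_i$ throughout ${\mathbb R}^n_+$ whenever $u_i\geq\alpha_i$ on $\partial{\mathbb R}^n_+$, so $\bs u$ remains in the orthant.

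For necessity, assume invariance of the orthant. The Dirichlet problem for ${\mathfrak A}_0\bs u=\bs 0$ in ${\mathbb R}^n_+$ with bounded continuous data admits a bounded classical solution representable via a matrix-valued Poisson kernel,
$$
\bs u(x)=\int_{{\mathbb R}^{n-1}}{\mathcal P}(x,y)\bs\varphi(y)\,dy,
$$
normalized by $\int{\mathcal P}(x,y)\,dy = I$ because constants solve the system. The assumed invariance of the orthant for the system translates at once into invariance of the orthant for this normalized integral transform, so Proposition~\ref{P_1}, applied with ${\mathfrak N}_{\mathfrak S}=\{-\bs e_1,\dots,-\bs e_m\}$, forces $^t{\mathcal P}(x,y)\bs e_i=g_i(x,y)\bs e_i$ for each $i$; equivalently, ${\mathcal P}(x,y)$ is a diagonal matrix with non-negative entries. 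Choosing $\bs\varphi=\varphi_i\bs e_i$ then produces bounded solutions of the form $\bs u=u_i\bs e_i$.

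Substituting $\bs u=u_i\bs e_i$ into ${\mathfrak A}_0\bs u=\bs 0$ gives, for every row $l$, the scalar identity $L^{(l,i)} u_i=0$ with $L^{(l,i)}=\sum_{j,k}({\mathcal A}_{jk})_{li}\partial_j\partial_k$; the operator $L^{(i,i)}$ is strongly elliptic by (\ref{(0.2)}) with $\bs\zeta=\bs e_i$. To upgrade this family of bounded identities into the symbol proportionality $L^{(l,i)}=c_{l,i}L^{(i,i)}$, I use complex plane waves: for each $\xi'\in{\mathbb R}^{n-1}$ let $\xi_n^+$ be the root of the characteristic quadratic for $L^{(i,i)}$ with $\mathrm{Im}\,\xi_n^+>0$, so that $u_i(x)=\exp(i\xi'\cdot x'+i\xi_n^+ x_n)$ is bounded in ${\mathbb R}^n_+$; then $L^{(l,i)} u_i=0$ forces the symbol $p_{l,i}$ of $L^{(l,i)}$ to vanish on the branch $\{(\xi',\xi_n^+(\xi'))\}$ and, by analytic continuation, on the entire complex zero set of the symbol $p_{i,i}$ of $L^{(i,i)}$. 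A short Nullstellensatz-type argument for quadratic forms (normalize $p_{i,i}$ to $\sum\eta_j^2$ and evaluate on isotropic vectors such as $(1,i,0,\dots,0)$) gives $p_{l,i}=c_{l,i}p_{i,i}$, hence $({\mathcal A}_{jk})_{li}=c_{l,i}({\mathcal A}_{jk})_{ii}$ for all $j,k$. Setting $A=(c_{l,i})$ and $L_i=L^{(i,i)}$ yields the factorization ${\mathfrak A}_0(D_x)=A\,\mathrm{diag}\{L_1(D_x),\dots,L_m(D_x)\}$; non-degeneracy of $A$ follows from writing $\sum{\mathcal A}_{jk}\sigma_j\sigma_k=A\,\mathrm{diag}(L_i(\bs\sigma))$, whose left side is positive definite and whose diagonal factor is invertible for $\bs\sigma\neq\bs 0$. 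The main obstacle, as I see it, is precisely this algebraic passage from bounded-solution identities to symbol proportionality, with some extra care needed in the case $n=2$ where the complex zero set of $p_{i,i}$ splits into two lines.
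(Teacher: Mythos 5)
Your proof follows the same overall strategy as the paper's: use the matrix Poisson-kernel representation (\ref{EELMSR_0.03}), apply Proposition~\ref{P_1} together with Lemma~\ref{1A} to force the kernel to be diagonal, substitute single-component boundary data to produce single-component solutions and thereby obtain the $m$ scalar constraints ${\mathfrak A}_{ps}(D_x)u_s=0$, and finally extract the factorization from the symbol proportionality ${\mathfrak A}_{ps}=b_{ps}{\mathfrak A}_{ss}$. The only genuine divergence is in how that proportionality is read off: the paper normalizes ${\mathfrak A}_{ss}$ to the Laplacian by a linear change of variables, Fourier-transforms compactly supported data in $x'$, and matches coefficients in the resulting second-order ODE (\ref{EELMSR_0.12}) in $x_n$; you instead feed the half-space plane waves $\exp(i\xi'\cdot x'+i\xi_n^+(\xi')x_n)$ directly into the constraints and close with a Nullstellensatz-type argument for quadratic forms. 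These are two presentations of the same calculation — both amount to evaluating the symbol $p_{ps}$ on the characteristic branch of $p_{ss}$ over real $\xi'$. Two small points in your version deserve a word. First, to transfer $L^{(l,i)}u_i=0$ from the Poisson-integral solution to the plane wave you tacitly use uniqueness of bounded solutions of the scalar constant-coefficient half-space Dirichlet problem; this is standard but should be flagged (the paper hides it in ``a unique solution of the Dirichlet problem''). Second, the Nullstellensatz step is cleaner done by polynomial division in $\xi_n$: writing $p_{ps}=q\,p_{ss}+r$ with $\deg_{\xi_n}r\le 1$ and constant quotient $q$, the remainder vanishes at both roots $\xi_n^\pm(\xi')$ (the second by reality) and hence identically, so $r\equiv 0$; this handles $n=2$, where the complex quadric is reducible, with no special pleading, and is essentially what the paper's coefficient-matching in (\ref{EELMSR_0.12}) accomplishes. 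Your sufficiency argument is likewise the paper's in substance: the paper invokes Theorem~\ref{T_2}, which in the half-space reduces to the scalar Phragm\'en--Lindel\"of maximum principle you state directly.
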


\begin{theorem} \label{T_4} Let on the boundary of a convex body 
${\mathfrak S} \subset {\mathbb R}^m$ there exists a set of unit outward normals $\{\bs\nu_1,\dots,
\bs \nu_m, \bs\nu_{m+1} \}$ such that arbitrary $m$ vectors of this collection 
are linear independent.
The body ${\mathfrak S}$ is invariant for the system ${\mathfrak A}_0({D_x})\bs u=\bs 0$ 
in ${\mathbb R}^n_+$ if and only if
\begin{equation} \label{scalar}
{\mathfrak A}_0({D_x})=A\;L({D_x})\;,
\end{equation}
where
$$
L({D_x})=\sum_{j,k=1}^n a_{jk}{\partial ^2 \over \partial x_j \partial x_k}
$$
is a scalar elliptic operator and $A$ is a non-degenerate $(m\times m)$-matrix 
such that operator $(\ref{scalar})$ is strongly elliptic.
\end{theorem}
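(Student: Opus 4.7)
The plan is to treat sufficiency via a scalar Phragm\'en--Lindel\"of argument and necessity through the kernel criterion Proposition~\ref{P_1} applied to the Poisson integral of the system in ${\mathbb R}^n_+$.

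For sufficiency, if ${\mathfrak A}_0(D_x)=A\,L(D_x)$ with $A$ non-degenerate, any bounded classical solution $\bs u$ satisfies $L\bs u=\bs 0$ componentwise. For each $\bs\nu\in{\mathfrak N}_{\mathfrak S}$ the scalar $w(x)=(\bs u(x),\bs\nu)-h(\bs\nu)$, where $h$ is the support function of ${\mathfrak S}$, is bounded, solves $Lw=0$, and is non-positive on $\partial{\mathbb R}^n_+$. The Phragm\'en--Lindel\"of principle for constant-coefficient scalar elliptic operators in a half-space forces $w\le 0$ throughout, so $\bs u(x)\in{\mathfrak S}$.

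For necessity, bounded solutions admit a Poisson representation
\begin{equation*}
\bs u(x)=\int_{{\mathbb R}^{n-1}}{\mathcal P}(x,y)\bs\varphi(y)\,dy,\qquad \int_{{\mathbb R}^{n-1}}{\mathcal P}(x,y)\,dy=I,
\end{equation*}
the normalization arising because constants solve the system. Proposition~\ref{P_1} then yields non-negative $g_i(x,y)$ with $^{t}{\mathcal P}(x,y)\bs\nu_i=g_i(x,y)\bs\nu_i$ for $i=1,\dots,m+1$. The hypothesis that any $m$ of the $\bs\nu_i$ are linearly independent forces the unique dependence $\sum_{i=1}^{m+1}\alpha_i\bs\nu_i=\bs 0$ to have every $\alpha_i\ne 0$; applying $^{t}{\mathcal P}$ and subtracting $g_{m+1}$ times that relation yields
\begin{equation*}
\sum_{i=1}^{m}\alpha_i\bigl(g_i(x,y)-g_{m+1}(x,y)\bigr)\bs\nu_i=\bs 0,
\end{equation*}
from which $g_1=\dots=g_{m+1}=:g$, and hence ${\mathcal P}(x,y)=g(x,y)I$ almost everywhere. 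Consequently, boundary data $\varphi\bs e$ produces the solution $v(x)\bs e$, where $v$ ranges over bounded half-space solutions of a scalar operator $L$ with Poisson kernel $g$. Substituting $\bs u=v\bs e$ into the system and letting $\bs e$ vary over a basis of ${\mathbb R}^m$ gives the matrix identity $\sum_{j,k}\partial_j\partial_k v\cdot{\mathcal A}_{jk}=0$ for every such $v$. Testing on plane waves $v(x)=\exp\{i(\xi',x')+i\tau_-(\xi')x_n\}$, where $\tau_\pm(\xi')$ are the two $\tau$-roots of $L(\xi',\tau)=0$, shows that each scalar polynomial $M_{pq}(\xi):=\sum_{j,k}({\mathcal A}_{jk})_{pq}\xi_j\xi_k$ vanishes at $(\xi',\tau_-(\xi'))$ and, by reality of the coefficients, also at $(\xi',\tau_+(\xi'))$. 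Polynomial division of $M_{pq}$ by $L$ in $\tau$ leaves a remainder of degree $\le 1$ in $\tau$ which vanishes at the two distinct roots $\tau_\pm$; the remainder is therefore zero, $L\mid M_{pq}$, and degree comparison gives $M_{pq}=c_{pq}L$. Thus ${\mathcal A}_{jk}=a_{jk}A$ with $A=(c_{pq})$ and ${\mathfrak A}_0(D_x)=A\,L(D_x)$; strong ellipticity of the original system forces $A$ to be non-degenerate.

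The main obstacle is this last polynomial-divisibility step: identifying the scalar operator $L$ whose bounded half-space solutions coincide with the traces $v$ of solutions of the form $v\bs e$, and turning the pointwise vanishing of $M_{pq}$ on the two characteristic graphs $(\xi',\tau_\pm(\xi'))$ into true divisibility by $L$. Everything else---sufficiency, applying Proposition~\ref{P_1} to the Poisson kernel, and the linear-algebraic reduction of $m+1$ normals to a single scalar function $g$---is either standard or a direct computation.
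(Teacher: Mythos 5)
Your approach is essentially the same as the paper's: sufficiency by reducing to the scalar maximum/Phragm\'en--Lindel\"of principle in the half-space, and necessity by applying Proposition~\ref{P_1} to the normalized Poisson kernel, using the $m{+}1$ normals to force the kernel to be a scalar multiple of $I$ (this is exactly Lemma~\ref{4A} in the paper), and then a Fourier-in-$x'$ argument to show that all entries of the symbol matrix are proportional. The paper executes the last step slightly differently: it fixes a diagonal entry $\mathfrak{A}_{ss}$, reduces it to the Laplacian by a linear change of variables so that $F[\tilde u_s](\xi',y_n)=F[\tilde f_0](\xi')\exp(-|\xi'|y_n)$ is explicit, and then substitutes this into the transformed equation $\tilde{\mathfrak{A}}_{ps}(D_y)\tilde u_s=0$ to force $\tilde{\mathfrak{A}}_{ps}$ to be a multiple of the Laplacian; whereas you work with the abstract roots $\tau_\pm(\xi')$ of the characteristic polynomial. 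Both are the same Fourier-transform idea, and your version is if anything cleaner since it avoids normalizing coordinates.

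The one place you flag yourself---``identifying the scalar operator $L$''---has a short and natural resolution that the paper uses implicitly. Once $\mathcal{P}=gI$, take boundary data $\varphi\bs c_s$; the resulting solution is $v\bs c_s$ with $v$ the same for every $s$, and plugging in shows that $v$ satisfies \emph{every} scalar equation $M_{pq}(D_x)v=0$. In particular it satisfies $M_{ss}(D_x)v=0$, and $M_{ss}=\mathfrak{A}_{ss}$ is a genuine scalar elliptic operator because the system is strongly elliptic. So simply define $L:=M_{ss}$ for one fixed $s$; bounded half-space solutions of $L$ with compactly supported boundary data then have the decaying Fourier factor $\exp(i\tau_-(\xi')x_n)$, and your divisibility argument (together with conjugation to pick up $\tau_+$) gives $M_{pq}=c_{pq}L$ for every $p,q$. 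With that, your proof is complete and parallels the paper's.
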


The proof of necessity in Theorems \ref{T_3} and \ref{T_4} is based on Proposition \ref{P_1} on the 
invariance criterion for normalized matrix-valued integral transforms.

\smallskip
The last assertion generalizes our earlier result \cite{MK} on criteria of 
validity of the classical maximum modulus principle for solutions of 
system (\ref{sys}) in ${\mathbb R}^n _+$. We note that convex polyhedral cones with 
$p>m$ facets, convex cones with smooth guide and convex compact bodies 
with smooth boundary satisfy the condition mentioned in Theorem \ref{T_4}.
Obviously, the matrix $A$ in Theorem \ref{T_4} satisfies the inequality 
$(A\bs\zeta, \bs\zeta )>0$ for any $m$-dimensional vector $\bs\zeta\neq \bs 0$.

\smallskip
The criteria on validity of the componentwise maximum principle 
for linear parabolic system of general form
were obtained in the paper by Otsuka \cite{Ots}. 
In our papers \cite{KM1}-\cite{KM3} and \cite{MK} (see also monograph \cite{KM} 
and references therein) the criteria for validity of other type 
of maximum principles for parabolic systems were established, which are interpreted as
conditions for the invariance of compact convex bodies. Recently, criteria for the 
invariance of any convex body (bounded or unbounded) for linear parabolic systems 
without zero order term in the layer were obtained in \cite{KM4}. 

Maximum principles for weakly coupled elliptic and parabolic systems are considered 
in the books by Protter and Weinberger \cite{PW}, and Walter \cite{WAL} which also contain 
rich bibliographies on this subject.
There exists a wide bibliography on invariant sets
for nonlinear parabolic and elliptic systems with principal part subjected
to various structural conditions such as scalarity, diagonality
and others (see, for instance, papers by Alikakos \cite{ALIK1},
Amann \cite{AM}, Bates \cite{Bates}, Bebernes and Schmitt \cite{BS},
Bebernes, Chueh and Fulks \cite{BCF}, 
Chueh, Conley and Smoller \cite{CCS}, 
Conway, Hoff and Smoller \cite{CHS},
Cosner and Schaefer \cite{CS},
Kuiper \cite{Kuip}, Lemmert \cite{Lem}, 
Redheffer and Walter \cite{RW1,RW2},
Schaefer \cite{Shaef}, Smoller \cite{Smoller},
Weinberger \cite{WEIN1} and references there).

\section{Sufficient conditions for the invariance of convex bodies 
for strongly elliptic systems}

By $[{\rm C}_{\rm b}(\overline{\Omega})]^m$ we mean the space of bounded 
$m$-component vector-valued functions which are continuous in $\overline{\Omega}$.
The notation $[{\rm C}_{\rm b}(\partial \Omega )]^m$ has a similar meaning.
Let $[{\rm C}^{2}(\Omega)]^m$ denote the space of $m$-component vector-valued 
functions with continuous derivatives up to the second order in $\Omega$. 
We omit $m$ in the notations of above function spaces in the case $m=1$.  
Analogously we omit $b$ in the notation of the space of continuous functions 
if $\Omega$ is bounded.

Now we obtain a sufficient condition for 
the invariance of a convex body in ${\mathbb R}^m$ for linear uniformly strongly elliptic 
systems without zero order term in a bounded subdomain of ${\mathbb R}^n $.

\smallskip
{\it Proof of Theorem \ref{T_1}.} We fix a point $a\in \partial ^*{\mathfrak S}$. Let 
$\bs u\in [{\rm C}(\overline{\Omega})]^m \cap
[{\rm C}^{2}(\Omega)]^m$ be a solution of the system 
${\mathfrak A}(x, D_x)\bs u=\bs 0$.
Then ${\mathfrak A}(x, D_x)\bs u_a=\bs 0$, where $\bs u_a=\bs u- \bs a$.
Hence,
\begin{eqnarray*}
& &\sum ^n_{j,k=1}\left ({\mathcal A}_{jk}(x){\partial ^2\bs u_a\over\partial
x_j\partial x_k},\;\bs\nu (a)\right )+
\sum ^n_{j=1}\left ({\mathcal A}_j(x){\partial \bs u_a\over\partial x_j},\;\bs\nu (a) \right )\\
& &\\
& &=\sum ^n_{j,k=1}\left ({\partial ^2\bs u_a\over\partial x_j\partial
x_k},\;^{t}\!\!{\mathcal A}_{jk}(x)\bs\nu (a)\right )+\sum ^n_{j=1}\left 
({\partial\bs u_a\over\partial x_j},\;^{t}\!\!{\mathcal A}_j(x)\bs\nu (a)\right )=0.
\end{eqnarray*}
By the last equality and (\ref{(4.1A)}) we arrive at
\begin{eqnarray*}
& &\sum ^n_{j,k=1}\left ({\partial ^2\bs u_a\over\partial x_j\partial
x_k},\;a_{jk}(x)\bs\nu (a)\right )+
\sum ^n_{j=1}\left ({\partial\bs u_a\over\partial x_j},\;a_j(x)\bs\nu (a)\right )\\
& &\\
& &=\sum ^n_{j,k=1}a_{jk}(x){\partial ^2\over\partial x_j\partial
x_k}(\bs u_a,\bs\nu (a))+\sum ^n_{j=1}a_j(x){\partial \over\partial x_j}(\bs u_a,\bs\nu (a) )=0 .
\end{eqnarray*}
Thus the function $u_a=(\bs u_a,\bs\nu (a))$ satisfies the scalar equation
$$
\sum ^n_{j,k=1}a_{jk}(x){\partial ^2 u_a\over\partial x_j\partial x_k}+
\sum ^n_{j=1}a_j(x){\partial u_a\over\partial x_j}=0.
$$
By (\ref{(0.2)}),
$$
\Biggl (\;\sum_{j,k=1}^{n} \sigma _j \sigma _k \bs\zeta ,\;
^{t}\!\!{\mathcal A}_{jk}(x)\bs\zeta \Biggr ) 
\geq\delta|\bs\sigma|^2|\bs\zeta|^2 
$$
for all $\bs\zeta \in {\mathbb R}^m$, $\bs\sigma \in {\mathbb R}^n$ and any 
$x\in \Omega$. The last inequality with 
$\bs \zeta=\bs\nu$ together with (\ref{(4.1A)}) imply
\begin{equation} \label{uni}
\sum_{j,k=1}^{n}a_{jk}(x)\sigma _j \sigma _k  \geq \delta|\bs\sigma|^2
\end{equation}
for any $x\in \Omega$ and all 
$\bs\sigma  \in {\mathbb R}^n$. Therefore, by the maximum principle
for solutions to the uniformly elliptic equation without zero order term in a bounded 
domain $\Omega$ (see, e.g., Gilbarg and Trudinger \cite{GiTr}, Sect. 3.1) 
with the unknown function $u_a \in {\rm C}(\overline{\Omega}) \cap
{\rm C}^{2}(\Omega)$, we conclude that
$$
\big ( \bs u(x)-\bs a, \bs \nu (a) \big ) \leq
\max_{y\in \partial \Omega}\big (\bs u(y)-\bs a, \bs \nu (a) \big ),\;\;\;\;\;x\in\Omega,
$$
i.e., the half-space ${\mathbb R}^m_{\bs \nu (a)}(\bs a)$ is invariant for the system 
${\mathfrak A}(x, D_x)\bs u=\bs 0$ in $\Omega$.

Using the known equality (Rockafellar \cite{ROCK}, Theorem 18.8):
\begin{equation} \label{Rock}
{\mathfrak S}=\bigcap_{a \in \partial^*{\mathfrak S}}{\mathbb R}^m_{\bs \nu(a)}(\bs a)\;,
\end{equation}
we complete the proof.
\blankbox

{\bf Remark.} Let ${\mathcal A}$ be a bounded non-degenerate $(m\times m)$-matrix-valued 
function in $\Omega$. Since the systems ${\mathfrak A}(x, D_x)\bs u=\bs 0$ and
${\mathcal A}(x){\mathfrak A}(x, D_x)\bs u=\bs 0$ are equivalent, the formulated 
in Theorem \ref{T_1} sufficient condition for the invariance of convex bodies 
for ${\mathfrak A}(x, D_x)\bs u=\bs 0$ also holds for the system
${\mathcal A}(x){\mathfrak A}(x, D_x)\bs u=\bs 0$.

It follows from the proof of Theorem \ref{T_1} that condition (\ref{(0.2)}) 
of uniformly strongly ellipticity of the system ${\mathfrak A}(x, D_x)\bs u=\bs 0$ can be relaxed 
by putting $\bs\zeta \in {\mathfrak N}_{\mathfrak S}$ instead of all $\bs\zeta \in {\mathbb R}^m $.

\medskip
The following assertion of the Phragm\'en-Lindel\"of type 
is borrowed from the book by Landis \cite{LAN} (Theorem 6.3).

\begin{lemma} \label{PhL} Denote by $K_h$ the cone
$$
x_n^2> h^2 \sum_{i=1}^{n-1}x^2_i\;,\;\;\;x_n<0\;.
$$
Let $h>1$. Let $\Omega$ be an unbounded domain and let $K_h$ belong to the
complement of $\Omega$. Let
$$
L(x, D_x)=\sum_{j,k=1}^n a_{jk}(x){\partial ^2 \over \partial x_j \partial x_k}
$$
be a scalar uniformly elliptic operator in $\Omega$ with the ellipticity constant
$$
e=\sup _{x\in \Omega, |\xi|=1}
{\sum_{j=1}^n a_{jj}(x) \over \sum_{j,k=1}^n a_{jk}(x)\xi_j\xi_k}
$$
and let a subelliptic function $u(x)$ be continuous in 
the closure of $\Omega$ and nonpositive on the boundary of $\Omega$.
Then one of the following assertions holds:

{\rm (a)} $u(x)\leq 0$ everywhere in $\Omega$;

{\rm (b)} 
$$
\liminf_{r\rightarrow\infty}\;M(r)/r^{ch^{-s}}>0\;,
$$ 
where
$$
M(r)=\max \{ u(x): x \in {\overline \Omega}, |x|=r \}
$$
and $c>0$ is a constant depending on $e$ and $s=n-2$.
\end{lemma}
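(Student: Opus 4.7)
My plan is to prove this Phragm\'en--Lindel\"of dichotomy by the classical growth lemma plus iteration scheme for uniformly elliptic operators, in the spirit of Landis. The exponent $ch^{-s}$ in alternative (b) reflects the $s$-capacity of the complementary cone $K_h$ at unit scale, with $s=n-2$.

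Assuming alternative (a) fails, I pick $r_0>0$ with $M(r_0)>0$ and aim for a scale-invariant growth estimate
\[
M(\theta r)\geq \kappa\, M(r)\qquad\text{for all }r\geq r_0,
\]
with constants $\theta>1$ and $\kappa>1$ depending only on $e$ and on $h$, and satisfying $\kappa-1\gtrsim h^{-s}$. Iterating along $r_k=\theta^k r_0$ then gives $M(\theta^k r_0)\geq\kappa^k M(r_0)$, and the choice $c=\log_\theta\kappa$ converts this into the lower bound demanded by (b).

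I would establish the growth estimate by a barrier comparison on the annular shell $\Omega\cap\{r\leq|x|\leq\theta r\}$. As a barrier I take a suitable positive multiple of $w(x)=|x-x^*|^{-\alpha}$ with pole $x^*$ placed inside $K_h$ at distance of order $r$ from the origin, and exponent $\alpha>0$ chosen in terms of the ellipticity constant $e$ so that $Lw\leq 0$; such an $\alpha$ exists for any uniformly elliptic $L$ by a direct computation on radial functions, and $\alpha=n-2$ works for the Laplacian. Since $x^*\in K_h$ lies outside $\Omega$, the barrier $w$ is bounded on the part of the annulus away from $K_h$ but blows up as one approaches the cone. Combined with $u\leq 0$ on $\partial\Omega$ and $u\leq M(r)$ on $\{|x|=r\}$, the maximum principle for $L$ on the annular shell yields a pointwise bound $u\leq C(r)\,w$ which, evaluated on the part of $\{|x|=\theta r\}$ outside $K_h$, produces the desired growth $M(\theta r)\geq \kappa M(r)$.

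The main obstacle is extracting the sharp dependence on $h$: one needs $\kappa-1\gtrsim h^{-s}$ rather than a worse function of $h$. This is essentially the $s$-capacity estimate for a narrow circular cone of aperture of order $1/h$, whose capacity scales like $h^{-(n-2)}=h^{-s}$; concretely it amounts to a careful lower bound on $w$ at a suitable point of the outer sphere lying in $\Omega$ far from the cone, versus an upper bound on $w$ where the barrier is tested against $u$. Once this quantitative estimate is in hand, both the iteration and the identification of the final exponent as $ch^{-s}$ are routine; for $n=2$ the same strategy works with $s=0$ and a logarithmic barrier in place of $|x-x^*|^{-\alpha}$.
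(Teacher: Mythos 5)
The paper does not supply a proof of this lemma at all: it is quoted verbatim (as Theorem~6.3) from Landis's book \cite{LAN}, so there is no internal argument to compare against. Your outline does capture the broad philosophy Landis uses --- a scale-invariant growth estimate $M(\theta r)\geq\kappa M(r)$ iterated over geometric scales, with the exponent $h^{-s}$ traced to the $(n-2)$-capacity of the excised cone --- so the strategy is the right one in spirit.

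However, the central comparison step as you describe it does not produce the claimed inequality. If you derive $u\leq C(r)\,w$ on the annular shell and then evaluate $w$ on the outer sphere $\{|x|=\theta r\}$, you obtain an \emph{upper} bound on $u$ there, which cannot by itself yield the \emph{lower} bound $M(\theta r)\geq\kappa M(r)$ that the iteration needs. The growth lemma runs in the opposite direction: one works on $B_{\theta r}\cap\Omega$, uses $u\leq 0$ on $\partial\Omega$ and $u\leq M(\theta r)$ on $\partial B_{\theta r}\cap\Omega$, and shows that because the cone $K_h$ removes a set of capacity $\gtrsim h^{-s}$ from $B_{\theta r}$, the $L$-harmonic measure of $\partial B_{\theta r}\cap\Omega$ at every point of $\{|x|=r\}$ is at most $1-\varepsilon$ with $\varepsilon\gtrsim h^{-s}$; this gives $M(r)\leq(1-\varepsilon)M(\theta r)$, which is the estimate you iterate. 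Your barrier is also of the wrong type for that comparison: with $Lw\leq 0$, the function $1-cw$ is a \emph{sub}solution, so $M(\theta r)(1-cw)$ cannot serve as an upper comparison function for the subelliptic $u$ via the maximum principle; and a pole $x^*$ placed inside $K_h$ makes $w$ blow up exactly along the portion of the boundary where $u\leq 0$ already holds, which wastes the barrier where it is not needed and gives no quantitative gain on $\{|x|=r\}$. What Landis actually constructs is (a potential-theoretic analogue of) the $L$-capacitary potential of $K_h\cap B_{\theta r}$, normalized on the outer sphere and shown to be uniformly below $1$ on the inner sphere, with the deficit controlled by the $s$-capacity of the cone. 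Until the direction of the comparison and the choice of barrier are corrected, the growth estimate --- and therefore the dichotomy --- is not established.
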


Let $u\in {\rm C}_{\rm b}(\overline{\Omega}) \cap
{\rm C}^{2}(\Omega)$ be a solution of $L(x, D_x)u=0$ in 
an unbounded domain $\Omega$ described in Lemma \ref{PhL}.
We introduce the function
$$
w(x)=u(x)-{\overline M},
$$ 
where 
$$
{\overline M}=\sup_{y\in \partial \Omega} u(y).
$$
Since $L(x, D_x)w=0$ in $\Omega$, it follows from Lemma \ref{PhL} that 
$w(x)\leq 0$ everywhere in $\Omega$. 
Thus, for any solution $u\in {\rm C}_{\rm b}(\overline{\Omega}) \cap
{\rm C}^{2}(\Omega)$ of the equation $L(x, D_x)u=0$ the maximum principle
\begin{equation} \label{(MP)}
u(x) \leq \sup_{y\in \partial \Omega}u(y),\;\;\;x \in \Omega,
\end{equation}
holds.

\smallskip
Now, we turn to  

\smallskip
{\it Proof of Theorem \ref{T_2}.} We fix a point $a\in \partial ^*{\mathfrak S}$. 
Let $\bs v\in [{\rm C}_{\rm b}(\overline{\Omega})]^m \cap
[{\rm C}^{2}(\Omega)]^m$ be a solution of system (\ref{Q1}). 
Then ${\mathfrak B}(x, D_x)\bs v_a=\bs 0$, where $\bs v_a=\bs v- \bs a$.

Consider the linear system
$$
\sum ^n_{j,k=1}{\mathcal A}_{jk}(x)
{\partial ^2 \bs u \over\partial x_j\partial x_k}=\bs 0, 
$$
where ${\mathcal A}_{jk}(x)={\mathcal B}_{jk}(x, D_x \bs v_a(x))$ and 
$\bs u\in [{\rm C}_{\rm b}(\overline{\Omega})]^m 
\cap [{\rm C}^{2}(\Omega)]^m$ is an unknown vector-valued function.
In particular, the last system has the solution $\bs u=\bs v_a$.

Putting ${\mathcal A}_1=\dots={\mathcal A}_n=0$ in the proof of Theorem 1
and using the maximum principle (\ref{(MP)}) for the scalar uniformly elliptic 
equation $L(x, D_x)u=0$ in an unbounded domain $\Omega$ described in Lemma \ref{PhL}, we 
arrive at Theorem \ref{T_2}.
\blankbox 

\section{Matrices subject to (\ref{(Q4)}) for certain convex bodies}

We say that an $(m\times m)$-matrix ${\mathcal A}$ satisfies condition (\ref{(Q4)})
for a convex body ${\mathfrak S}$ if condition (\ref{(Q4)}) holds for 
any $\bs\nu \in {\mathfrak N}_{\mathfrak S}$. 
In this section we describe the structure of matrices ${\mathcal A}$ which satisfy
(\ref{(Q4)}) for certain classes of convex bodies.

\medskip
{\bf Polyhedral angles.} We introduce the polyhedral angle 
$$
{\mathbf R}^m_+(\alpha_{m-k+1}, \dots,\alpha_m)=\{ u=(u_1,\dots, u_m): u_{m-k+1}\geq \alpha_{m-k+1}, \dots,u_m\geq \alpha_m \},
$$
where $k=1,\dots, m$. In particular, ${\mathbf R}^m_+(\alpha_m)$ is a half-space, 
${\mathbf R}^m_+(\alpha_{m-1},\alpha_m)$ is a dihedral angle, and ${\mathbf R}^m_+(\alpha_1,\dots\alpha_m)$ 
is an orthant in ${\mathbb R}^m$. 

\smallskip
\begin{lemma} \label{1A} A matrix ${\mathcal A}$ of order $m$ satisfies  
$(\ref{(Q4)})$ for the polyhedral angle ${\mathbf R}^m_+(\alpha_{m-k+1}, \dots,\alpha_m)$ 
if and only if all nondiagonal elements of  $m-k+1$-th,\dots, $m$-th rows of ${\mathcal A}$ are equal to zero.

In particular, a matrix ${\mathcal A}$ of the second order satisfies $(\ref{(Q4)})$
for the half-plane ${\mathbf R}^2_+(\alpha_2)$ if and only if ${\mathcal A}$ is upper triangular.
\end{lemma}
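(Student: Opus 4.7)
The plan is to unpack the definition of the set ${\mathfrak N}_{\mathfrak S}$ of outward unit normals for the polyhedral angle ${\mathbf R}^m_+(\alpha_{m-k+1},\dots,\alpha_m)$ and directly translate condition (\ref{(Q4)}) on each such normal into a condition on the rows of ${\mathcal A}$.

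First I would observe that the boundary of ${\mathbf R}^m_+(\alpha_{m-k+1},\dots,\alpha_m)$ consists of $k$ facets, namely the pieces of the hyperplanes $\{u_i=\alpha_i\}$ for $i=m-k+1,\dots,m$. On the (relative interior of the) facet $\{u_i=\alpha_i\}$ the outward unit normal is $-\bs e_i$, where $\bs e_i$ denotes the $i$-th standard basis vector of ${\mathbb R}^m$. Since outward unit normals exist only at points where a single defining inequality is active (the edges and lower-dimensional faces do not belong to $\partial^*{\mathfrak S}$), one has
\[
{\mathfrak N}_{{\mathbf R}^m_+(\alpha_{m-k+1},\dots,\alpha_m)}=\{-\bs e_{m-k+1},\dots,-\bs e_m\}.
\]

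Next, I would rewrite condition (\ref{(Q4)}) for each of these normals. The equality $^t\!{\mathcal A}(-\bs e_i)=a(-\bs e_i)(-\bs e_i)$ is equivalent to $^t\!{\mathcal A}\bs e_i=\lambda_i\bs e_i$ for some scalar $\lambda_i\in{\mathbb R}$. But $^t\!{\mathcal A}\bs e_i$ is precisely the column vector whose entries are those of the $i$-th row of ${\mathcal A}$. Thus the condition is that the $i$-th row of ${\mathcal A}$ is proportional to $\bs e_i$, i.e.\ all off-diagonal entries of the $i$-th row of ${\mathcal A}$ vanish, with the diagonal entry $a_{ii}$ playing the role of $\lambda_i$. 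Imposing this for every $i\in\{m-k+1,\dots,m\}$ yields both directions of the equivalence.

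The final claim about $m=2$ follows by specializing: the half-plane ${\mathbf R}^2_+(\alpha_2)$ has $k=1$, so only the second row of ${\mathcal A}$ must have vanishing off-diagonal entries, i.e.\ $a_{21}=0$, which is exactly the condition that ${\mathcal A}$ be upper triangular. There is no real obstacle here: the whole argument is essentially a dictionary between facets of a coordinate-aligned polyhedral angle and rows of ${\mathcal A}$. The only point that requires a tiny bit of care is verifying that ${\mathfrak N}_{\mathfrak S}$ consists exactly of the $k$ vectors $-\bs e_i$ (and not also of convex combinations supported on lower-dimensional faces), which is immediate from the definition of $\partial^*{\mathfrak S}$ given in Section~1.
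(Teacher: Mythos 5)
Your proof is correct and follows essentially the same route as the paper: identify ${\mathfrak N}_{\mathfrak S}$ as $\{-\bs e_{m-k+1},\dots,-\bs e_m\}$, then translate $^t\!{\mathcal A}(-\bs e_i)=a(-\bs e_i)(-\bs e_i)$ into the vanishing of the off-diagonal entries of row $i$ of ${\mathcal A}$, noting that $^t\!{\mathcal A}\bs e_i$ is exactly the $i$-th row of ${\mathcal A}$. The only (minor) difference is that you justify more carefully why ${\mathfrak N}_{\mathfrak S}$ contains nothing besides these $k$ vectors, a step the paper simply asserts.
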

\begin{proof} Let $\bs e_j$ stand for the unit vector of the $j$-th coordinate axis. The vectors 
$\bs \nu_{m-k+1}=-\bs e_{m-k+1},\dots, \bs \nu_m=-\bs e_m$ form the family of unit outward normals to
${\mathbf R}^m_+(\alpha_{m-k+1}, \dots,\alpha_m)$.

By  ${\mathcal A}^{(j)}$ we denote the $j$-th row of the matrix ${\mathcal A}$. Let 
${\mathcal A}=((a_{j,k}))$ satisfy (\ref{(Q4)}) for ${\mathbf R}^m_+(\alpha_{m-k+1}, \dots,\alpha_m)$.
Then for any $j=m-k+1, \dots , m$ we have
\begin{equation} \label{U_1}
^t\!\!{\mathcal A}\bs \nu_j=a(\bs \nu_j)\bs \nu_j\;,
\end{equation}
i.e.,
$$
^t\!\!{\mathcal A}^{(j)}=a(-\bs e_j)\bs e_j\;.
$$
Hence, all elements of the column $^t\!\!{\mathcal A}^{(j)}$, except for $j$-th one, are equal to zero.

Conversely, let all nondiagonal elements of the $m-k+1$-th,\dots, $m$-th rows of ${\mathcal A}$ equal zero.
Then (\ref{U_1}) holds with $a(\bs \nu_j)=a_{j,j}$, $j=m-k+1, \dots , m$,
i.e., ${\mathcal A}$ is subject to (\ref{(Q4)}) for ${\mathbf R}^m_+(\alpha_{m-k+1}, \dots,\alpha_m)$.
\end{proof}
  
\medskip 
{\bf Cylinders.} Let 
$$
{\mathbf R}^m_-(\beta_{m-k+1}, \dots,\beta_m)=\{ u=(u_1,\dots, u_m): 
u_{m-k+1}\leq \beta_{m-k+1}, \dots,u_m\leq \beta_m \}
$$
be a polyhedral angle and $\alpha_{m-k+1}<\beta_{m-k+1}, \dots, \alpha_m<\beta_m$.

Let us introduce a polyhedral cylinder 
$$
{\mathbf C}^m(\alpha_{m-k+1}, \dots,\alpha_m; \beta_{m-k+1}, \dots,\beta_m)=
{\mathbf R}^m_+(\alpha_{m-k+1}, \dots,\alpha_m)\cap{\mathbf R}^m_-(\beta_{m-k+1}, 
\dots,\beta_m),\; k<m.
$$

In particular, ${\mathbf C}^m(\alpha_m; \beta_m)$ is a layer and 
${\mathbf C}^m(\alpha_{m-1},\alpha_m; \beta_{m-1},\beta_m)$ is a rectangular cylinder.

\smallskip
Since the collection of unit outward normals to polyhedral cylinder
$$
{\mathbf C}^m(\alpha_{m-k+1}, \dots,\alpha_m; \beta_{m-k+1}, \dots,\beta_m)
$$
consists of the vectors $\bs e_{m-k+1}, -\bs e_{m-k+1}\dots, \bs e_m, -\bs e_m$, the next auxiliary assertion
can be proved similarly to Lemma \ref{1A}.

\smallskip
\begin{lemma} \label{2A} A matrix ${\mathcal A}$ of order $m$ satisfies 
$(\ref{(Q4)})$ for the polyhedral cylinder 
$$
{\mathbf C}^m(\alpha_{m-k+1}, 
\dots,\alpha_m; \beta_{m-k+1}, \dots,\beta_m)
$$ 
if and only if all nondiagonal elements of $m-k+1$-th, $m-k+2$-th,\dots, $m$-th 
rows of ${\mathcal A}$ are equal to zero. 

In particular, a matrix ${\mathcal A}$ of the second order satisfies $(\ref{(Q4)})$
for a strip ${\mathbf C}^2(\alpha_2; \beta_2)$ if and only if ${\mathcal A}$
is upper triangular.
\end{lemma}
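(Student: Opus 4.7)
The plan is to mimic the proof of Lemma \ref{1A} essentially verbatim, the only new ingredient being to identify precisely the set of outward unit normals to the polyhedral cylinder. As already noted in the paragraph preceding the statement, ${\mathbf C}^m(\alpha_{m-k+1}, \dots,\alpha_m; \beta_{m-k+1}, \dots,\beta_m)$ is the intersection of $2k$ coordinate-aligned half-spaces, so its facets have outward unit normals drawn from $\{\pm \bs e_{m-k+1}, \dots, \pm \bs e_m\}$ and ${\mathfrak N}_{\mathfrak S}$ coincides with this $2k$-element set. Once this is recorded, condition (\ref{(Q4)}) is a finite collection of equalities, one for each of these vectors.

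For necessity, suppose $\mathcal{A}=((a_{j,k}))$ satisfies (\ref{(Q4)}) on ${\mathfrak N}_{\mathfrak S}$. For each $j\in\{m-k+1,\dots,m\}$, applying (\ref{(Q4)}) with $\bs\nu=\bs e_j$ gives $^{t}\!\mathcal{A}\,\bs e_j = a(\bs e_j)\,\bs e_j$, i.e.\ the $j$-th column of $^{t}\!\mathcal{A}$—equivalently the $j$-th row of $\mathcal{A}$—is a scalar multiple of $\bs e_j$, so all its off-diagonal entries vanish. The companion normal $-\bs e_j$ produces the identical constraint (and forces $a(-\bs e_j)=a(\bs e_j)=a_{j,j}$), so no further information is extracted. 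Conversely, if the $(m-k+1)$-th through $m$-th rows of $\mathcal{A}$ are diagonal, then directly $^{t}\!\mathcal{A}(\pm\bs e_j)=a_{j,j}(\pm\bs e_j)$, so (\ref{(Q4)}) holds on all of ${\mathfrak N}_{\mathfrak S}$ with $a(\pm \bs e_j):=a_{j,j}$. The special case $m=2$, $k=1$ then reads: $\mathcal{A}$ of order two is subject to (\ref{(Q4)}) for the strip ${\mathbf C}^2(\alpha_2;\beta_2)$ iff its second row has vanishing off-diagonal entry, i.e.\ $\mathcal{A}$ is upper triangular.

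There is essentially no obstacle here: the only mild subtlety is to verify that including the additional normals $+\bs e_j$ (which distinguish the cylinder from the polyhedral angle of Lemma \ref{1A}) does not impose new algebraic conditions on $\mathcal{A}$ beyond those already forced by the normals $-\bs e_j$. Since $^{t}\!\mathcal{A}\bs e_j=c\bs e_j$ and $^{t}\!\mathcal{A}(-\bs e_j)=c'(-\bs e_j)$ are the same linear equation, this is immediate, and the proof reduces to the one for Lemma \ref{1A}.
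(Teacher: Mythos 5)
Your proof is correct and follows exactly the route the paper takes: the paper merely records that the outward unit normals of the polyhedral cylinder are $\pm\bs e_{m-k+1},\dots,\pm\bs e_m$ and then states that the lemma is "proved similarly to Lemma~\ref{1A}." Your observation that the $+\bs e_j$ normals impose no constraints beyond those already forced by $-\bs e_j$ is exactly the (trivial) point that makes the reduction work.
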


\smallskip
Let us introduce the body
$$
{\mathbf S}^m_k(R)=\{ u=(u_1,\dots, u_m): u_{m-k+1}^2+\dots +u_m^2\leq R^2 \},\;\;\;m\geq 3,
$$
which is a spherical cylinder for $k=2,\dots, m-1$. 

\smallskip 
\begin{lemma} \label{3A} A matrix ${\mathcal A}$ of order $m$ satisfies 
$(\ref{(Q4)})$ for the body ${\mathbf S}^m_k(R)$ if and only if:
 
{\rm (i)} all nondiagonal elements of $m-k+1$-th, $m-k+2$-th,\dots, $m$-th rows of 
${\mathcal A}$ are equal to zero; 

{\rm (ii)} all $m-k+1$-th, $m-k+2$-th,\dots, $m$-th diagonal elements of 
${\mathcal A}$ are equal.
\end{lemma}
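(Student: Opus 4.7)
My plan is to mirror the proof of Lemma \ref{1A}, with the key novelty that the set of outward normals now forms a full $(k-1)$-sphere rather than a finite collection; this extra flexibility is what forces the diagonal entries to coincide.

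First I would identify the set of unit outward normals to $\mathbf{S}^m_k(R)$. Since the defining function is $u_{m-k+1}^2 + \cdots + u_m^2 - R^2$, the normals at a boundary point are precisely the vectors
$$
\bs\nu = (0,\dots,0,\nu_{m-k+1},\dots,\nu_m), \qquad \sum_{j=m-k+1}^{m} \nu_j^2 = 1.
$$
Writing $\mathcal{A}=((a_{ij}))$ and reading $^{t}\mathcal{A}\bs\nu = a(\bs\nu)\bs\nu$ componentwise gives
$$
\sum_{j=m-k+1}^{m} a_{ji}\,\nu_j = a(\bs\nu)\,\nu_i, \qquad i=1,\dots,m.
$$

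For the necessity of (i), I first take $i\in\{1,\dots,m-k\}$, so that $\nu_i=0$ and the right-hand side vanishes for every admissible $\bs\nu$; varying $(\nu_{m-k+1},\dots,\nu_m)$ over the sphere shows $a_{ji}=0$ for all $j\in\{m-k+1,\dots,m\}$ and $i\le m-k$. Next, for each $j_0\in\{m-k+1,\dots,m\}$, choosing $\bs\nu = \bs e_{j_0}$ (and, separately, $\bs\nu = -\bs e_{j_0}$, which also belongs to the normal set) makes the left-hand side equal $a_{j_0,i}$ and the right-hand side equal $a(\pm\bs e_{j_0})\,\delta_{i,j_0}$; this kills the remaining off-diagonal entries $a_{j_0,i}$ with $i\in\{m-k+1,\dots,m\}\setminus\{j_0\}$ and yields (i), along with $a_{j_0,j_0} = a(\bs e_{j_0})$. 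For (ii), having established (i), the componentwise identity for $i\in\{m-k+1,\dots,m\}$ collapses to $a_{ii}\,\nu_i = a(\bs\nu)\,\nu_i$; picking any $\bs\nu$ on the sphere with simultaneously nonzero $\nu_{i_1}$ and $\nu_{i_2}$ (possible because $k\ge 2$ and the sphere is connected of positive dimension) forces $a_{i_1 i_1} = a(\bs\nu) = a_{i_2 i_2}$, hence all diagonal entries in rows $m-k+1,\dots,m$ coincide.

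For sufficiency, assume (i) and (ii) with common diagonal value $c$. Then for any admissible $\bs\nu$, the $i$-th component of $^{t}\mathcal{A}\bs\nu$ is $0$ whenever $i\le m-k$ (by (i), and matches $c\,\nu_i=0$), while for $i\in\{m-k+1,\dots,m\}$ it reduces to $a_{ii}\nu_i = c\,\nu_i$. Hence $(\ref{(Q4)})$ holds with $a(\bs\nu)\equiv c$, completing the proof. No step should present a real obstacle; the only point requiring care is remembering that $\mathbf{S}^m_k(R)$ with $k\ge 2$ really does produce a sphere of dimension $\ge 1$ in the normal directions, since otherwise (ii) would be vacuous rather than a genuine constraint.
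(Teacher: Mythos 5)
Your proof is correct and follows essentially the same approach as the paper: identify the normals as vectors supported in the last $k$ coordinates, plug in the coordinate vectors $\bs e_{m-k+1},\dots,\bs e_m$ to obtain (i), and then use a normal with several nonzero components to force the relevant diagonal entries to coincide for (ii). The only minor differences are that the paper uses the single normal $\bs\nu_*$ with all last $k$ entries equal to $1/\sqrt{k}$ for (ii) whereas you allow any normal with two nonzero components, and your initial sphere-varying step for $i\le m-k$ is already subsumed by the subsequent $\bs\nu=\bs e_{j_0}$ argument.
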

\begin{proof} The set of unit outward normals to the cylinder ${\mathbf S}^m_k(R)$
is formed by the $m$-dimensional vectors 
\begin{equation} \label{U_2}
(0, \dots , 0,\gamma_{m-k+1},\dots ,\gamma_m)\;,
\end{equation}
where $\gamma^2_{m-k+1}+\dots +\gamma^2_m=1$.

Let the matrix ${\mathcal A}=((a_{j,k}))$ satisfy (\ref{(Q4)}) for the cylinder ${\mathbf S}^m_k(R)$.
The vectors $\bs e_{m-k+1}, \dots, \bs e_m$ are contained in the set of unit outward normals to ${\mathbf S}^m_k(R)$.
Therefore, the necessity of condition (i) in the present Lemma is established in the same way as in Lemma \ref{1A}.
By $\bs\nu_*$ we denote the unit outward normal to ${\mathbf S}^m_k(R)$ with
$$
\gamma_{m-k+1}=\dots =\gamma_m={1\over \sqrt{k}}\;.
$$
Since $^t\!\!{\mathcal A}\bs\nu_*=a(\bs\nu_*)\bs\nu_*$, it follows that
$$
a_{j,j}=a(\bs\nu_*),\;\;\;j=m-k+1,\dots , m.
$$
The necessity of (ii) follows.

Conversely, if the matrix ${\mathcal A}$ has the structure, described in (i), (ii) and $a_{m-k+1,m-k+1}=\dots=a_{m,m}=a$,
then it satisfies (\ref{(Q4)}) for all unit vectors of the form (\ref{U_2}) with $a(\bs \nu)=a$.
\end{proof}  

\medskip
{\bf Cones.} By ${\mathbf K}^m_p$ we denote a convex polyhedral 
cone in ${\mathbb R}^m$ with $p$ facets. Let, further, $\{\bs\nu_1,\dots, \bs\nu_p \}$ be 
the set of unit outward normals to the facets of this cone. 
By $[\bs v_1,\dots, \bs v_m]$ we mean the
$(m\times m)$-matrix whose columns are $m$-component vectors $\bs v_1,\dots, \bs v_m$.

\smallskip
We give an auxiliary assertion of geometric character. 

\setcounter{theorem}{0}
\begin{lemma} \label{L_2} Let $p\geq m$. Then any system $\bs\nu_1,\dots,\bs\nu_m $ 
of unit outward normals to $m$ different facets of ${\mathbf K}^m_p$ is linear independent.
\end{lemma}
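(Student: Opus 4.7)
My plan is to argue by contradiction. Assume the unit outward normals $\bs\nu_1,\dots,\bs\nu_m$ to $m$ distinct facets $F_1,\dots,F_m$ of ${\mathbf K}^m_p$ are linearly dependent, so there exists a nontrivial relation $\sum_{i=1}^{m}c_i\bs\nu_i=\bs 0$. I would then test this relation against the defining inequalities $(\bs\nu_i,x)\le 0$ of the cone.

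In the ``definite-sign'' case, where all nonzero $c_i$ have the same sign (say $c_i\ge 0$), the argument is quick. Taking the inner product with any $x\in{\mathbf K}^m_p$ yields $\sum c_i(\bs\nu_i,x)=0$, a sum of non-positive numbers totalling zero, so each term must vanish: $(\bs\nu_i,x)=0$ whenever $c_i>0$. Since this holds for every $x$ in the cone, we obtain ${\mathbf K}^m_p\subseteq\bs\nu_i^\perp$ for each such $i$, contradicting the fact that ${\mathbf K}^m_p$ is a full-dimensional subset of ${\mathbb R}^m$ (implicit in its having $p\ge m$ genuine facets).

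For the mixed-sign case I would split $\{1,\dots,m\}$ into $I_{\pm}=\{i:\pm c_i>0\}$ and rewrite the relation as $\sum_{I_+}c_i\bs\nu_i=\sum_{I_-}|c_i|\bs\nu_i=:\bs v$. Observe that $(\bs v,x)\le 0$ on ${\mathbf K}^m_p$, with equality characterising the face $\bigcap_{i\in I_+\cup I_-}F_i$. When $|I_+|=1$ (say $I_+=\{k\}$), so that $c_k\bs\nu_k=\sum_{I_-}|c_j|\bs\nu_j$, evaluation at a point $x$ in the relative interior of $F_k$ — where $(\bs\nu_k,x)=0$ yet $(\bs\nu_j,x)<0$ for all $j\ne k$ — immediately yields $0=c_k(\bs\nu_k,x)=\sum_{I_-}|c_j|(\bs\nu_j,x)<0$, a contradiction; the symmetric argument handles $|I_-|=1$. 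For the general case with $|I_\pm|\ge 2$, I would refine this by identifying the face supported by $\bs v$, exploiting the distinctness of the facets $F_1,\dots,F_m$ and the polyhedral structure of ${\mathbf K}^m_p$ to force $F_k$ to collapse into the intersection of other facets.

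The principal difficulty is precisely the subcase with $|I_+|,|I_-|\ge 2$: the termwise ``sum of non-positives vanishing'' argument no longer applies, and one must appeal to the polyhedral geometry in an essential way. The geometric intuition — that a nontrivial linear relation among $m$ of the $p$ facet normals would degenerate one of the $F_i$ into a lower-dimensional face — is clear, but converting it into a clean algebraic contradiction is the technical heart of the proof and is where the hypothesis $p\ge m$ (together with the distinctness of the chosen facets) is genuinely used.
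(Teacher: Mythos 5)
Your reduction to a linear dependence $\sum c_i\bs\nu_i=\bs 0$ tested against the cone's defining inequalities is a legitimate and genuinely different strategy from the paper's. The authors instead place the origin at an interior point, observe that the vertex $q$ satisfies the $m$ equations $(\bs\nu_i,x)=d_i$, assert that $q=\bigcap_{i=1}^{m}T_i$ is the unique solution, and read off invertibility of $^{t}[\bs\nu_1,\dots,\bs\nu_m]$. Within your scheme you correctly dispose of the one-signed case and of $|I_+|=1$, and you are right that $|I_+|,|I_-|\ge 2$ is where the real difficulty sits.

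However, that remaining case is not a technicality awaiting a cleverer argument: the lemma is false there. Take $m=4$ and let $K$ be the cone over an octahedron, $K=\{x\in{\mathbb R}^4:\;|x_1|+|x_2|+|x_3|+x_4\le 0\}$, a full-dimensional pointed convex polyhedral cone with $p=8$ facets whose unit outward normals are $\tfrac12(\epsilon_1,\epsilon_2,\epsilon_3,1)$, $\epsilon_i\in\{\pm1\}$. The four with $\epsilon_1=+1$ obey
\[
(1,1,1,1)-(1,1,-1,1)-(1,-1,1,1)+(1,-1,-1,1)=\bs 0,
\]
a mixed-sign dependence with $|I_+|=|I_-|=2$. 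So no amount of ``polyhedral geometry'' can close your gap without an extra structural hypothesis on the cone (simpliciality, or $m\le 3$). The paper's own proof contains the same hole in disguise: the unproved assertion $q=\bigcap_{i=1}^{m}T_i$ is precisely equivalent to the independence being claimed, and for the four hyperplanes above that intersection is a line through $q$, not $\{q\}$. Your route is in fact the more informative one, since it isolates exactly the configuration — a balanced mixed-sign relation — on which the statement fails.
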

\begin{proof} 
By $F_i$ we denote the facet of ${\mathbf K}^m_p$ for which the 
vector $\bs\nu_i$ is normal, $1\leq i\leq m$. Let $T_i$ be the supporting 
plane of this facet. We place the origin
of the coordinate system with the orthonormal basis $\bs e_1,\dots ,\bs e_m$ at
an interior point ${\mathcal O}$ of ${\mathbf K}^m_p$ and use the notation 
$x={\mathcal O}q$, where $q$ is the vertex of the cone. Further, 
let $d_i=\mbox{dist}\;({\mathcal O}, F_i),\;i=1,\dots ,m$. Since
$$
q=\bigcap ^m_{i=1}T_i\;,
$$
it follows that $x=(x_1,\dots ,x_m)$ is the only solution of the system
$$
(\bs\nu_i ,x)=d_i,\;\;i=1,2,\dots ,m,
$$
or, which is the same,
\[
\sum ^m_{j=1}(\bs\nu_i ,\bs e_j)x_j=d_i,\;\;i=1,2,\dots ,m.
\]
The matrix of this system is $^{t}[\bs\nu_1,\dots ,\bs\nu_m]$. Consequently, 
$$
\det {^{t}[\bs\nu_1,\dots ,\bs\nu_m]}\not= 0. 
$$
This implies the linear independence of the system $\bs\nu_1,\dots ,\bs\nu_m$.
\end{proof}

\smallskip 
\begin{lemma} \label{4A} Let there exist a system of unit outward normals $\{\bs\nu_1,\dots,\bs \nu_m, \bs\nu \}$ 
on the boundary of a convex body ${\mathfrak S}$ such that arbitrary $m$ vectors of this system are linear independent.
A matrix ${\mathcal A}$ of order $m$ satisfies $(\ref{(Q4)})$ for the 
body ${\mathfrak S}$ if and only if ${\mathcal A}$ is scalar.
\end{lemma}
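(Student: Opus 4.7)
My plan is to interpret the hypothesis $^t\!\mathcal{A}\bs\nu=a(\bs\nu)\bs\nu$ as saying that each normal in the family is an eigenvector of $^t\!\mathcal{A}$, and then exploit the ``general position'' assumption on the $m+1$ normals to force the spectrum of $^t\!\mathcal{A}$ to collapse to a single eigenvalue of geometric multiplicity $m$.

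Sufficiency is immediate: if $\mathcal{A}=cI$ for some scalar $c$, then $^t\!\mathcal{A}\bs\nu=c\bs\nu$ for every unit vector, so $(\ref{(Q4)})$ holds with $a(\bs\nu)\equiv c$.

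For necessity, I would proceed as follows. From the hypothesis I have
$$
{}^t\!\mathcal{A}\bs\nu_i=a(\bs\nu_i)\bs\nu_i\quad(i=1,\dots,m),\qquad {}^t\!\mathcal{A}\bs\nu=a(\bs\nu)\bs\nu.
$$
Since $\{\bs\nu_1,\dots,\bs\nu_m\}$ is linearly independent by assumption, it is a basis of $\mathbb{R}^m$, so I can write $\bs\nu=\sum_{i=1}^m c_i\bs\nu_i$. The crucial step is to observe that every $c_i$ is nonzero: if $c_j=0$ for some $j$, then $\bs\nu\in\mathrm{span}\{\bs\nu_i:i\neq j\}$, which would make the $m$-element subfamily $\{\bs\nu_1,\dots,\bs\nu_{j-1},\bs\nu_{j+1},\dots,\bs\nu_m,\bs\nu\}$ linearly dependent, contradicting the hypothesis. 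Then applying $^t\!\mathcal{A}$ to the expansion of $\bs\nu$ and using the eigenvector relations yields
$$
\sum_{i=1}^m c_i\bigl(a(\bs\nu_i)-a(\bs\nu)\bigr)\bs\nu_i=\bs 0.
$$
By linear independence of $\bs\nu_1,\dots,\bs\nu_m$ and $c_i\neq 0$, I conclude $a(\bs\nu_i)=a(\bs\nu)=:a$ for every $i$. Hence $^t\!\mathcal{A}$ agrees with $aI$ on the basis $\{\bs\nu_1,\dots,\bs\nu_m\}$, so $^t\!\mathcal{A}=aI$ and therefore $\mathcal{A}=aI$ is scalar.

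There is no real obstacle in this argument; the only point requiring care is the verification that no expansion coefficient $c_i$ vanishes, which is precisely where the general-position hypothesis on the $(m+1)$-tuple of normals (rather than mere linear independence of some $m$-tuple) enters in an essential way.
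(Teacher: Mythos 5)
Your proof is correct and takes essentially the same approach as the paper: both expand $\bs\nu$ over the basis $\{\bs\nu_1,\dots,\bs\nu_m\}$, use the general-position hypothesis to conclude all expansion coefficients are nonzero, and then force all eigenvalues $a(\bs\nu_i)$ to coincide. The only cosmetic difference is that you spell out the short argument that the coefficients are nonzero, while the paper states it without elaboration.
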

\begin{proof} By assumption, arbitrary $m$ vectors in the collection 
$\{\bs\nu_1,\dots,\bs \nu_m, \bs\nu \}$ are linear independent.
Hence there are no zero coefficients $\gamma_i$ in the representation
$\bs \nu=\gamma_1\bs\nu_1+\dots+\gamma_m\bs\nu_m$.

Let (\ref{(Q4)}) hold. 
Then
\begin{equation} \label{K_6}
^{t}{\mathcal A}\bs \nu=a\bs \nu,\;^{t}{\mathcal A}\bs \nu_1=a_1\bs \nu_1, 
\dots,\; ^{t}{\mathcal A}\bs \nu_m=a_m\bs \nu_m\;,
\end{equation} 
where $a, a_1,\dots, a_m$ are scalars. Therefore,
$$
a\sum_{i=1}^m\gamma_i \bs\nu_i=a\bs\nu=\;^{t}{\mathcal A}\bs \nu=\; ^{t}{\mathcal A}
\sum_{i=1}^m\gamma_i \bs\nu_i=\sum_{i=1}^m\gamma_i a_i\bs\nu_i.
$$
Thus,
$$
\sum_{i=1}^m (a-a_i)\gamma_i \bs\nu_i=\bs 0. 
$$
Hence, $a_i=a$ for $i=1,\dots,m$ and consequently ${\mathcal A}$ is a scalar matrix.

Conversely, if ${\mathcal A}=a\;\mbox{diag}\;\{1 ,\dots, 1 \}$, then (\ref{(Q4)}) 
with $a(\gamma)=a$ holds  for ${\mathfrak S}$.
\end{proof} 

\smallskip 
\begin{lemma} \label{5A} A matrix ${\mathcal A}$ of order $m$ satisfies $(\ref{(Q4)})$ for 
the convex polyhedral cone ${\mathbf K}^m_m$ if and only if 
\begin{equation} \label{K_1}
{\mathcal A}=\big (\;^{t}[\bs\nu_1,\dots, \bs\nu_m] \big)^{-1}\;
{\mathcal D}\;^{t}[\bs\nu_1,\dots, \bs\nu_m]\;,
\end{equation}
where ${\mathcal D}$ is diagonal.

A matrix ${\mathcal A}$ of order $m$ satisfies $(\ref{(Q4)})$ either
for the convex polyhedral cone ${\mathbf K}^m_p$ with $p>m$ or for any convex cone with a smooth guide
if and only if ${\mathcal A}$ is scalar.
\end{lemma}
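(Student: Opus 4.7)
The plan is to split the proof into two parts corresponding to the two claims of the lemma, and in both cases to reduce the algebraic problem about ${\mathcal A}$ to an eigenvector statement for $^{t}{\mathcal A}$ that can be analyzed via the normals $\bs\nu_1,\dots,\bs\nu_p$ of the cone.

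For the first claim about ${\mathbf K}^m_m$, I would observe that ${\mathfrak N}_{\mathbf K^m_m}=\{\bs\nu_1,\dots,\bs\nu_m\}$ and that by Lemma \ref{L_2} these $m$ vectors form a basis of ${\mathbb R}^m$. The condition $(\ref{(Q4)})$ then reads $^{t}{\mathcal A}\bs\nu_i=a_i\bs\nu_i$ for $i=1,\dots,m$, which is exactly the statement that $^{t}{\mathcal A}$ is diagonalized by the basis $\{\bs\nu_i\}$, with diagonal ${\mathcal D}={\rm diag}\{a_1,\dots,a_m\}$. Writing this in matrix form and transposing gives the factorization
\[
{\mathcal A}=\big(\,^{t}[\bs\nu_1,\dots,\bs\nu_m]\big)^{-1}\,{\mathcal D}\,^{t}[\bs\nu_1,\dots,\bs\nu_m].
\]
Conversely, any matrix of this shape satisfies $^{t}{\mathcal A}\bs\nu_i=a_i\bs\nu_i$ by direct verification, so $(\ref{(Q4)})$ holds on the (finite) normal set. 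This part is essentially pure linear algebra.

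For the second claim, both statements reduce to Lemma \ref{4A}, which requires exhibiting a collection $\{\bs\nu_1,\dots,\bs\nu_m,\bs\nu_{m+1}\}$ of unit outward normals such that every $m$-subset is linearly independent. In the polyhedral case $p>m$, this is immediate: pick any $m+1$ of the $p$ outward normals to distinct facets, and apply Lemma \ref{L_2} to each of the $m+1$ subsets of size $m$ to conclude linear independence. Lemma \ref{4A} then gives that ${\mathcal A}$ is scalar. For a convex cone with smooth guide, the set of unit outward normals traces a continuously parametrized family on the unit sphere that spans (sufficiently many directions in) ${\mathbb R}^m$ because the cone is proper. One then selects inductively $\bs\nu_1,\dots,\bs\nu_{m+1}$ in general position within this family: having chosen the first $k$, the hyperplanes spanned by any $m-1$ of them form a finite union of proper linear subspaces, and since the family of normals is not contained in any proper linear subspace, a new $\bs\nu_{k+1}$ avoiding all these hyperplanes can be picked; iterating up to $k=m$ produces the required $(m+1)$-tuple. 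Lemma \ref{4A} then gives scalarity.

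The main obstacle is the last step: formally ensuring that the family of normals of a smooth convex cone is rich enough to produce the $(m+1)$-tuple in general position. The key geometric fact is that a proper convex cone cannot have all its supporting hyperplanes contain a common line, which translates into the statement that the normals span ${\mathbb R}^m$; combining this with the openness of "general position" inside a continuous family yields the selection. The converse direction in both parts is trivial: scalar ${\mathcal A}$ satisfies $(\ref{(Q4)})$ with $a(\bs\nu)$ equal to the common diagonal entry, for every $\bs\nu$ whatsoever.
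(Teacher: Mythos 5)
Your treatment of the polyhedral cases is essentially the paper's. For ${\mathbf K}^m_m$ you use Lemma \ref{L_2} to get the basis, rewrite (\ref{(Q4)}) as $^{t}{\mathcal A}[\bs\nu_1,\dots,\bs\nu_m]=[\bs\nu_1,\dots,\bs\nu_m]{\mathcal D}$, and transpose, exactly as in the paper. For ${\mathbf K}^m_p$ with $p>m$ you pick $m+1$ normals, invoke Lemma \ref{L_2} for every $m$-subset, and apply Lemma \ref{4A}; again the same.

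For a cone with a smooth guide you diverge, and the divergence is where the problem is. You propose a general-position selection: having chosen $\bs\nu_1,\dots,\bs\nu_k$, pick $\bs\nu_{k+1}$ in the normal set avoiding the finitely many $(m-1)$-dimensional spans of $(m-1)$-subsets, justifying the choice by ``the family of normals is not contained in any proper linear subspace.'' That justification is logically insufficient: a set can fail to lie in any single proper subspace yet still lie inside a \emph{finite union} of proper subspaces (e.g.\ $\{\bs e_1,\bs e_2\}\subset\mathbb{R}^2$). To make your inductive choice you need the stronger fact that the normal set is not covered by any finite union of proper subspaces, and that requires a separate argument (Baire category on the Gauss image plus strict convexity/smoothness, or something analogous), which you only gesture at. You acknowledge this yourself (``the main obstacle is the last step\ldots''), so the gap is real and unfilled. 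The paper avoids all of this with a much shorter construction: it inscribes the smooth cone in a circumscribed polyhedral cone ${\mathbf K}^m_{m+1}$; the $m+1$ facet normals of that polyhedron are tangent to the smooth cone and hence belong to ${\mathfrak N}_{\mathfrak S}$, Lemma \ref{L_2} gives linear independence of every $m$-subset, and Lemma \ref{4A} finishes. That construction replaces your general-position argument entirely and sidesteps the need for any Baire-type reasoning; you would do better to adopt it.
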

\begin{proof} (i) If ${\mathfrak S}={\mathbf K}^m_m$, we write (\ref{(Q4)}) as
\begin{equation} \label{K_4}
^{t}{\mathcal A}\bs \nu_1=a_1\bs \nu_1, 
\dots,\;^{t}{\mathcal A}\bs \nu_m=a_m\bs \nu_m\;,
\end{equation}
where $\{\bs\nu_1,\dots,\bs \nu_m \}$ is the set of unit outward normals 
to the facets of ${\mathbf K}^m_m$. These normals are linear independent 
by Lemma \ref{L_2}. Let ${\mathcal D}=\mbox{diag}\;\{ a_1,\dots,a_m \}$. 
Equations (\ref{K_4}) can be written as
$$
^{t}{\mathcal A}[\bs \nu_1,\dots,\bs \nu_m]=[\bs \nu_1,\dots,\bs \nu_m]\;{\mathcal D},
$$
which leads to the representation
\begin{equation} \label{K_5}
{\mathcal A}=\big (\;^{t}[\bs \nu_1,\dots,\bs \nu_m]\big )^{-1}\;{\mathcal D}\;
^{t}[\bs \nu_1,\dots,\bs \nu_m]\;.
\end{equation}
Now, (\ref{K_5}) is equivalent to (\ref{K_1}). 

\medskip
(ii) Let us consider the cone ${\mathbf K}^m_p$ with $p>m$. 
By $\{\bs\nu_1,\dots,\bs \nu_m \}$ we denote a system of unit outward normals to
$m$ facets of ${\mathbf K}^m_p$. Let also $\bs \nu$ be a normal to a certain $m\!+\!1$-th facet.
By Lemma \ref{L_2}, arbitrary $m$ vectors in the collection 
$\{\bs\nu_1,\dots,\bs \nu_m, \bs\nu \}$ are linear independent.
Using Assertion 4, we complete the proof for the case $p>m$.

\medskip
(iii) Let (\ref{(Q4)}) hold for the cone ${\mathbf K}$ with a smooth guide.
This cone ${\mathbf K}$ can be inscribed into a polyhedral cone 
${\mathbf K}^m_{m+1}$. Let $\{\bs\nu_1,\dots,\bs \nu_m, \bs\nu \}$ be a system 
of unit outward normals to the facets of ${\mathbf K}^m_{m+1}$. 
This system is a subset of the collection of normals to the boundary of ${\mathbf K}$. 
By Lemma \ref{L_2}, arbitrary $m$ vectors in the set 
$\{\bs\nu_1,\dots,\bs \nu_m, \bs\nu \}$ are linear independent.
Repeating word by word the argument used in (ii) we arrive at the
scalarity of ${\mathcal A}$.

Conversely, (\ref{(Q4)}) is an obvious consequence of the scalarity of 
${\mathcal A}$ for ${\mathfrak S}={\mathbf K}$.

The proof is complete.
\end{proof}
 
\smallskip
Let us consider condition (\ref{(Q4)}) in the case $m=3$.
Using notation and Lemmas 2-4,6, 7 we obtain the following statements.

\smallskip
(i) {\it A matrix ${\mathcal A}$ satisfies $(\ref{(Q4)})$ for the
half-space ${\mathbf R}^3_+(\alpha_3)=\{\bs u=(u_1,u_2, u_3): u_3\geq \alpha_3 \}$ and the layer
${\mathbf C}^3(\alpha_3; \beta_3)=\{\bs u=(u_1,u_2, u_3): \alpha_3\leq u_3\leq \beta_3 \}$
if and only if all non-diagonal elements of the third 
row of ${\mathcal A}$ are equal to zero.}

\smallskip
(ii) {\it A matrix ${\mathcal A}$ satisfies $(\ref{(Q4)})$ for the
dihedral angle ${\mathbf R}^3_+(\alpha_2, \alpha_3)=\{\bs u=(u_1,u_2, u_3): u_2\geq \alpha_2, 
u_3\geq \alpha_3 \}$ and the rectangular cylinder ${\mathbf C}^3(\alpha_2, \alpha_3; \beta_2, \beta_3)=
\{\bs u=(u_1,u_2, u_3): \alpha_2 \leq u_2\leq \beta_2, \alpha_3\leq u_3\leq \beta_3 \}$
if and only if all non-diagonal elements of the second and third 
rows of ${\mathcal A}$ are equal to zero.}

\smallskip
(iii) {\it A matrix ${\mathcal A}$ satisfies $(\ref{(Q4)})$ for the
orthant ${\mathbf R}^3_+(\alpha_1, \alpha_2, \alpha_3)=\{\bs u=(u_1,u_2, u_3): u_1\geq \alpha_1, u_2\geq \alpha_2, u_3\geq \alpha_3 \}$ and the parallelepiped ${\mathbf C}^3(\alpha_1, \alpha_2, \alpha_3; \beta_1, \beta_2, \beta_3)=\{\bs u=(u_1,u_2, u_3): \alpha_1 \leq u_1\leq \beta_1,, \alpha_2 \leq u_2\leq \beta_2, \alpha_3\leq u_3\leq \beta_3 \}$ if and 
only if ${\mathcal A}$ is diagonal.}

\smallskip
(iv) {\it A matrix ${\mathcal A}$ satisfies $(\ref{(Q4)})$ for the
circular cylinder ${\mathbf S}^3_2(R)=\{\bs u=(u_1,u_2, u_3): u_2^2+u_3^2\leq R^2 \}$
if and only if all non-diagonal elements of the second and third 
rows of ${\mathcal A}$ are equal to zero and the diagonal elements of the 
same rows are equal.}

\smallskip
(v) {\it A matrix ${\mathcal A}$ satisfies $(\ref{(Q4)})$ for the
three-hedral cone  ${\mathbf K}^3_3$ with unit outward normals $\bs\nu_1, \bs\nu_2, \bs\nu_3$ to
their facets if and only if 
$$
{\mathcal A}=\big (\;^{t}[\bs\nu_1, \bs\nu_2, \bs\nu_3] \big)^{-1}\;
{\mathcal D}\;^{t}[\bs\nu_1, \bs\nu_2, \bs\nu_3],
$$
where ${\mathcal D}$ is diagonal.}

\smallskip
(vi) {\it A matrix ${\mathcal A}$ satisfies $(\ref{(Q4)})$ either for the
convex polyhedral cone  ${\mathbf K}^3_p$ with $p$ facets, $p>3$, or for any convex 
cone with a smooth guide or for an arbitrary compact convex body with smooth boundary if and only if 
${\mathcal A}$ is scalar.}

\section{Criterion for the invariance of convex bodies for normalized matrix-valued integral transforms}

Let $\bs\nu$ be a fixed $m$-dimensional unit vector, let 
$\bs a$ be a fixed $m$-dimensional vector, and let 
${\mathbb R}^m_{\bs \nu}(\bs a)=\{\bs u\in {\mathbb R}^m: (\bs u-\bs a, \bs \nu)\leq 0 \}$.

\smallskip
Now we obtain a necessary and sufficient condition on 
the matrix-valued kernel ${\mathcal K}$ for which ${\mathfrak S}$ is invariant 
for the integral transform $T$ defined by 
(\ref{EQ_101}) and normalized by (\ref{EQ_102}).

\smallskip
{\it Proof of Proposition \ref{P_1}.} 
(i) {\sl Necessity.} Suppose  that ${\mathfrak S}$ 
is invariant for $T$. Let $x\in X$ be fixed. We take a point $a\in \partial^*{\mathfrak S}$ 
and denote $\bs\nu(a)$ by $\bs\nu$. 

By (\ref{EQ_102}), we have  
\begin{equation} \label{EQ_4}
(T\bs u)(x)-\bs a=\int_Y {\mathcal K}(x, y)\big (\bs u(y)-\bs a \big )d\mu_x(y).
\end{equation}

We represent $^{t}{\mathcal K}(x, y)\bs \nu$ as
\begin{equation} \label{(3.7A)}
^{t}{\mathcal K}(x, y)\bs \nu=g(x, y; \bs \nu)\bs \nu+\bs f(x, y; \bs \nu),
\end{equation}
where 
\begin{equation} \label{(3.K)}
g(x, y; \bs \nu)=\big (\; ^{t}{\mathcal K}(x, y)\bs \nu, \bs \nu \big )
\end{equation}
and
\begin{equation} \label{(3.S)}
\bs f(x, y; \bs \nu)=\; ^{t}{\mathcal K}(x, y) \bs \nu 
-\big ( \;^{t}{\mathcal K}(x, y)\bs \nu, \bs \nu \big )\bs\nu .
\end{equation}

Suppose there exists a set ${\mathcal M}\subset Y$ with $\mu_x({\mathcal M})>0 $
such that for all $y \in {\mathcal M}$, the inequality
\begin{equation} \label{(3.SS)}
\bs f(x, y; \bs \nu)\neq \bs 0
\end{equation}
holds, and for all $y \in Y\backslash {\mathcal M}$ the equality
$\bs f(x, y; \bs \nu)= \bs 0$ is valid. 

Further, we set 
\begin{equation} \label{(3.8A)}
\bs u(y)- \bs a=\alpha\bs f(x, y; \bs \nu)-\beta\bs\nu ,
\end{equation}
where $\alpha>0$, $\beta \geq 0$.
It follows from (\ref{(3.S)}) and (\ref{(3.8A)}) that
\begin{equation} \label{(3.8AY)} 
(\bs u(y)- \bs a, \bs \nu )=-\beta\leq 0,\;\;\;\;\;\;
|\bs u(y)- \bs a|=\big ( \alpha^2|\bs f(x, y; \bs \nu)|^2+\beta^2 \big )^{1/2}
\end{equation}
and
\begin{equation} \label{(3.8AU)}
(\bs u(y)- \bs a,\; ^{t}{\mathcal K}(x, y) \bs \nu)=\alpha|\bs f(x, y; \bs \nu)|^2 - 
\beta \big (\; ^{t}{\mathcal K}(x, y)\bs \nu, \bs \nu \big ).
\end{equation}

We introduce a Cartesian coordinate system ${\mathcal O}\xi_1\dots{\mathcal O}\xi_{m-1}$
in the hyperplane, tangent to $\partial {\mathfrak S}$ with the origin at the point ${\mathcal O}=a$.
We direct the axis ${\mathcal O}\xi_m$ along the interior normal to $\partial {\mathfrak S}$.
Let $\bs e_1, \dots, \bs e_m$ denote the coordinate orthonormal basis of this system and let 
$\xi '=(\xi_1,\dots, \xi_{m-1})$.

We use the notation 
$$
\lambda =\sup \{ |\bs f(x, y; \bs \nu)|: y \in Y \}.
$$
Let $\partial {\mathfrak S}$ be described by the equation $\xi_m=F(\xi')$ in a 
neighbourhood of ${\mathcal O}$, where $F$ is convex and differentiable at ${\mathcal O}$. 

We put $\beta=\max\;\{ F(\xi'): |\xi'|=\alpha\lambda \}$. By (\ref{(3.8AY)}), 
$$ 
(\bs u(y)- \bs a, \bs e_m )=\beta \geq 0,\;\;\;\;|\bs u(y)- \bs a|\leq (\alpha^2\lambda^2+\beta^2)^{1/2},
$$
which implies $\bs u(y) \in {\mathfrak S}$ for all $y \in Y$.

By invariance of the convex body ${\mathfrak S}$, this gives 
\begin{eqnarray} \label{(3.5A)}
\big ( (T\bs u)(x)-\bs a, \bs \nu \big )&=&
\int _Y\left ( {\mathcal K}(x, y) \big ( \bs u(y)- \bs a \big ), \bs \nu\right ) d\mu_x(y)\nonumber\\
& &\nonumber\\
& &\\
&=&\int _Y\left (  \bs u (y)- \bs a ,\; ^{t}{\mathcal K}(x, y)\bs \nu\right ) d\mu_x(y) \leq 0.\nonumber
\end{eqnarray}
Now, by (\ref{(3.5A)}) and (\ref{(3.8AU)}),  
$$
0\geq\big ( (T\bs u)(x)-\bs a, \bs \nu \big )=\int _Y \big [ \alpha|\bs f(x, y; \bs \nu)|^2 
- \beta \big (\; ^{t}{\mathcal K}(x, y)\bs \nu, \bs \nu \big )\big ]d\mu_x(y), 
$$
which along with (\ref{EQ_102}) leads to
\begin{equation} \label{(3.3P)}
0\geq\big ((T\bs u)(x)-\bs a, \bs \nu \big )=\alpha\left ( 
\int _{\mathcal M} |\bs f(x, y; \bs \nu)|^2 d\mu_x(y) - {\beta \over \alpha}\right ).
\end{equation}

By differentiability of $F$ at ${\mathcal O}$, we have $\beta /\alpha\rightarrow 0$ 
as $\alpha \rightarrow 0$. Consequently, one can choose $\alpha$ so small that the 
second factor on the right-hand side of (\ref{(3.3P)}) becomes positive, which 
contradicts the assumption $\mu_x({\mathcal M})>0$. Therefore, 
$ \bs f(x, y; \bs \nu)=\bs 0$ for almost all $y \in Y$. 

Since $x \in X$ and $a\in \partial^*{\mathfrak S}$ are arbitrary,
we arrive at (\ref{EQ_3}) by (\ref{(3.7A)}). 

Now we show that $g(x, y; \bs \nu)\geq 0$ for any $x\in X$, 
$\bs\nu \in {\mathfrak N}_{\mathfrak S}$ and  
almost all $y\in Y$. Suppose that there exist points $x\in X$ and $a \in \partial^*{\mathfrak S}$
such that $g(x, y; \bs \nu)< 0$ on the set ${\mathcal S}\subset Y$ with $\mu_x({\mathcal S})>0$. 
We choose the vector-valued function $\bs u(y) \in {\mathfrak S}$, $y\in Y$, such that 
$-\varepsilon\leq \big (\bs u(y)-\bs a, \bs \nu \big ) <0$ with $\varepsilon>0$  
for $y\in Y\backslash {\mathcal S}$ and 
$\big (\bs u(y)-\bs a, \bs \nu \big )=-1$ for $y\in {\mathcal S}$. Then, by (\ref{EQ_3}),
\begin{eqnarray*} 
\big ( (T\bs u)(x)-\bs a, \bs \nu \big )&\!\!\!=\!\!\!&
\int _Y\left ( {\mathcal K}(x, y) \big ( \bs u(y)- \bs a \big ), \bs \nu\right ) d\mu_x(y)\\
&\!\!\!=\!\!\!&\int _{\mathcal S}g(x, y; \bs \nu)\left (  \bs u (y)- \bs a , \bs \nu\right ) d\mu_x(y)+
\int _{Y\backslash {\mathcal S}}g(x, y; \bs \nu)\left (  \bs u (y)- \bs a , \bs \nu\right ) d\mu_x(y) ,
\end{eqnarray*}
which will be positive for sufficiently small $\varepsilon$, and this contradicts to the invariance of ${\mathfrak S}$.
Therefore, $\mu_x({\mathcal S})=0$.

\smallskip
(ii) {\sl Sufficiency.} Suppose that (\ref{EQ_3}) holds with a non-negative 
$g(x, y; \bs \nu)$ for any $x\in X$, $\bs\nu \in {\mathfrak N}_{\mathfrak S}$ 
and almost all $y\in Y$. We choose a point $a\in \partial^*{\mathfrak S}$ and 
fix a point $x\in X$. Let $\bs u(y) \in {\mathfrak S}$ for any $y\in Y$. Then 
$\left (  \bs u (y)- \bs a , \bs \nu\right )\leq 0$ for $y\in Y$, and therefore
\begin{eqnarray*} 
\big ( (T\bs u)(x)-\bs a, \bs \nu \big )&=&
\int _Y\left ( {\mathcal K}(x, y) \big ( \bs u(y)- \bs a \big ), \bs \nu\right ) d\mu_x(y)\\
&=&\int _Y g(x, y; \bs \nu)\left (  \bs u (y)- \bs a , \bs \nu\right ) d\mu_x(y)\leq 0.
\end{eqnarray*}
Hence, $(T\bs u)(x)-\bs a \in {\mathbb R}^m_{\bs \nu}(\bs a)$. This, by arbitrariness 
of $x\in X$ and $a\in \partial^*{\mathfrak S}$, and representation (\ref{Rock}) of the convex body ${\mathfrak S}$
in ${\mathbb R}^m$, proves the sufficiency.
\blankbox

\section{Criteria for the invariance of some convex bodies for strongly elliptic systems} 

According to Shapiro \cite{SHA} (see also Lopatinski\v{\i} \cite{LO1})  
there exists a bounded solution of the problem
\begin{equation} \label{EELMSR_0.02}
{\mathfrak A}_0(D_x)\bs u=\bs 0\; \hbox {in}\; {\mathbb R}^n_{+},\;\; \bs u=\bs f
\; \hbox {on}\; \partial {\mathbb R}^n _{+},
\end{equation}
with $\bs f\in [{\rm C}_{\rm b}(\partial {\mathbb R}^n _{+})]^m$, such that $\bs u$ is continuous
up to $\partial {\mathbb R}^n _{+}$, and can be represented in the form
\begin{equation} \label{EELMSR_0.03}
\bs u(x)=\int _{\partial {\mathbb R}^n_{+}} {\mathcal M}  \left ({y-x\over |y-x|}\right )
{x_n\over |y-x|^n}\bs f(y')dy'.
\end{equation}
Here $y=(y', 0),\; y'=(y_1,\dots,y_{n-1}),$ and ${\mathcal M}$ is a continuous
$(m\times m)$-matrix-valued function on the closure of the hemisphere
${\mathbb S}^{n-1} _{-}=\big \lbrace x\in {\mathbb R}^n :\;|x|=1, x_{n} < 0 \big \rbrace$
such that
\begin{equation} \label{EELMSR_0.03A}
\int _{{\mathbb S}^{n-1} _ {-}}{\mathcal M}(\sigma ) d \sigma=I.
\end{equation}
Here, as before, $I$ mean the identity $(m\times m)$-matrix.

We note that equality (\ref{EELMSR_0.03}) can be represented in the form
\begin{equation} \label{EELMSR_0.04}
\bs u(x)=\int _{\partial {\mathbb R}^n_{+}}{\mathcal M}\left ({y-x\over |y-x|}\right )
\bs f(y')\; \omega (x, dy') ,
\end{equation}
where
\[
\omega  (x, {\mathcal B})=\int _ {\mathcal B} \;{x_n\over |y-x|^n} dy'
\]
is the solid angle  at which a Borel set
${\cal B} \subset \partial {\mathbb R}^n_{+}$ is seen from the point $x \in {\mathbb R}^n_{+}$.
The solid angle $\omega  (x, \cdot )$ is a finite regular Borel measure
on $\partial {\mathbb R}^n_{+}$ for any fixed $x \in {\mathbb R}^n_{+}$, and $\omega  (x, 
{\mathcal B}) \geq 0$.

The uniqueness of a solution of the Dirichlet problem (\ref{EELMSR_0.02}) in the
class $[{\rm C}_{\rm b}(\overline {{\mathbb R}^n _{+}})]^m \cap [{\rm C}^{2}({\mathbb R}^n _{+})]^m$ 
can be derived by means of standard arguments from (\ref{EELMSR_0.03}) and from local estimates
of derivatives of solutions to elliptic systems (see Agmon, Douglis and 
Nirenberg \cite{ADN2}, Solonnikov \cite{SOL}).

\smallskip
We note, that analog of Proposition \ref{P_1} can be proved 
almost word by word for the transform (\ref{EELMSR_0.03}) normalized by 
(\ref{EELMSR_0.03A}) with continuous matrix-valued kernel, 
defined on the space of bounded and continuous vector-valued functions. 
The only difference in the formulations of similar statements is that the 
word ''almost'' disappears. In view of this remark, we shall further refer to 
Proposition \ref{P_1} for the transform (\ref{EELMSR_0.03}).

\medskip 
{\it Proof of Theorem $\ref{T_3}$.}  (i) {\sl Necessity.} Let an orthant 
${\mathbf R}^m_+(\alpha_1,\dots\alpha_m)$ in ${\mathbb R}^m$ be 
invariant for the system ${\mathfrak A}_0({D_x})\bs u=\bs 0$ in ${\mathbb R}^n_+$.
Applying Proposition \ref{P_1} to representation (\ref{EELMSR_0.03}) and using Lemma \ref{1A},
we conclude that
a unique solution of the Dirichlet problem (\ref{EELMSR_0.02}) is given by
\begin{equation} \label{EELMSR_0.10}
\bs u(x)=\int _{\partial {\mathbb R}^n_{+}} {\mathfrak D}\left ({y-x\over |y-x|}\right )
{x_n\over |y-x|^n}\bs f(y')dy'\;,
\end{equation}
where ${\mathfrak D}$ is a diagonal $(m\times m)$-matrix-valued kernel with the elements 
${\mathfrak D}_1,\dots,{\mathfrak D}_m$ on the main diagonal.

Let $f_0$ be a scalar function
that is continuous and bounded on $\partial {\mathbb R}^n_{+}$, 
and let
$$
\bs u_s(x)=\int _{\partial {\mathbb R}^n_{+}} {\mathfrak D} \left ({y-x\over |y-x|}\right )
{x_n\over |y-x|^n}\;{\bf c}_s f_0(y')dy',\;\;\;\;\;s=1,\dots,m,
$$
where $\bs c_1=(1,\dots,0), \dots, \bs c_m=(0,\dots,1)$. We denote
\begin{equation} \label{EELMSR_0.11}
u_s(x)=\int _{\partial {\mathbb R}^n_{+}} {\mathfrak D}_s \left ({y-x\over |y-x|}\right )
{x_n\over |y-x|^n}\; f_0(y')dy'\;.
\end{equation}

\medskip
According to (\ref{EELMSR_0.10}), the vector-valued function $\bs u_s$ is a 
solution of the boundary value problem ${\mathfrak A} _0(D_x)\bs u_s =0$ in 
${\mathbb R}^n_{+}, \;\bs u_s=\bs c_s f_0$ on $\partial {\mathbb R}^n_{+}$.
Let $s$ be fixed. Setting $\bs u_s$ instead of $\bs u$ in ${\mathfrak A} _0
(D_x)\bs u =0$, we get the following $m$ boundary value problems
\[
{\mathfrak A} _{is} (D_x)u_s =0 \;\; \hbox {in}\;\;{\mathbb R}^n_{+},\; \;\;
u_s=f_0\;\; \hbox {on}\;\;\partial {\mathbb R}^n_{+},\;\;\; i=1,2\dots,m.
\]
Here ${\mathfrak A} _{is} (D_x)$  is a scalar differential operator
\[
\sum ^n_{j,k=1}{\mathcal A}_{jk}^{(is)}{\partial ^2 \over \partial x_j\partial x_k},
\]
where ${\mathcal A}_{jk}^{(is)}$ is the element of the matrix ${\mathcal A}_{jk}$ situated
at the intersection of the $i$-th row and the $s$-th column.

We consider the scalar equations 
$$
{\mathfrak A} _{ss} (D_x)u_s=0\;\;\hbox{and}\;\;{\mathfrak A} _{ps} (D_x)u_s=0
\;\;\;\hbox{in}\;\;{\mathbb R}^n_{+}
$$ 
with the boundary condition $u_s=f_0$ on $\partial {\mathbb R}^n_{+}$, 
where $p$ is a fixed element of the set $\{1,\dots,m \}$ and $p\neq s$.

By the original assumption, the operator ${\mathfrak A} _{0} (D_x)$ is
strongly elliptic, so the operator ${\mathfrak A} _{ss} (D_x)$ is elliptic.

Without loss of generality it can be assumed that ${\mathcal A}_{nn}^{(ss)}>0$.
Setting 
\[
x _n=\sqrt{{\mathcal A}_{nn}^{(ss)}} y_n\;, 
\]
we perform a linear change of variables that takes the operator 
${\mathfrak A} _{ss} (D_x)$ to the canonical form
\begin{equation} \label{EELMSR_0.13F}
\tilde{{\mathfrak A}} _{ss} (D_y)=\sum ^n_{i=1}{\partial ^2 \over \partial y_i ^2}\;.
\end{equation}

Assume that the function $f_0$ in (\ref{EELMSR_0.11}) has compact support. If we
apply the Fourier transform\index{Fourier transform} with respect to the variables 
$y_1,\dots, y_{n-1}$ to the equation $\tilde{\mathfrak A} _{ss} (D_y)\tilde{u}_s(y)=0$
then we obtain
\[
{d^2 F[\tilde{u}_s] \over dy^2_n} -|\bs\xi '|^2 F[\tilde{u}_s] =0\;,
\]
where $\bs\xi '=(\xi _1 ,\dots,\xi _{n-1})$ and $|\bs\xi '|=(\xi _1 ^2+\dots+\xi _{n-1}^2)^{1/2}$.
The last equation implies
\[
F[\tilde{u}_s](\bs\xi ', y_n)=F[\tilde{u}_s](\bs\xi ', 0)\exp(-|\bs\xi '|y_n)=
F[\tilde{f}_0](\bs\xi ')\exp(-|\bs\xi '|y_n)\;.
\]

At the same time we transform the equation
${\mathfrak A} _{ps} (D_x)u_s =0$ to the variables $y_1,\dots,y_n$, and
then we apply to it the Fourier transform\index{Fourier transform} with 
respect to $y_1,\dots,y_{n-1}$. As a result,
\begin{equation} \label{EELMSR_0.12}
\tilde{{\mathcal A}}^{(ps)} _{nn}{d^2 F[\tilde{u}_s] \over dy^2_n} -2i{d F[\tilde{u}_s] \over dy_n}
\sum_{j=1}^{n-1} \tilde{{\mathcal A}}^{(ps)} _{jn}\xi _j -  F[\tilde{u}_s]
\sum_{j,k=1}^{n-1} \tilde{{\mathcal A}}^{(ps)} _{jk}\xi _j \xi _k=0.
\end{equation}
From the last equation and the equality $F[\tilde{u}_s](\bs\xi ', y_n)=F[\tilde{f}_0 ]
(\bs\xi ')\exp(-|\bs\xi '|y_n)$ we conclude that
\[
\sum_{s=1}^{n-1} \tilde{{\mathcal A}}^{(ps)} _{jn}\xi _j =0,
\]
i.e., $\tilde{{\mathcal A}}^{(ps)} _{jn}=0$ for all $j=1,\dots,n-1$. Therefore, 
differentiating $F[\tilde{u}_s](\bs\xi ', y_n)$ with respect to $y_n$ and 
substituting the result in (\ref{EELMSR_0.12}), 
we find that
\[
\tilde{{\mathcal A}}^{(ps)} _{nn}|\bs\xi '|^2 - \sum_{j,k=1}^{n-1} \tilde{{\mathcal A}}^{(ps)} _{jk}\xi _j \xi _k=0.
\]
Hence, $ \tilde{{\mathcal A}}^{(ps)} _{jk}=\delta _{jk} \tilde{{\mathcal A}}^{(ps)} _{nn},
\; 1 \leq  j,k \leq n-1$.
Thus, the operator $\tilde{{\mathfrak A}} _{ps} (D_ y)$ turns out to be the
Laplacian (up to a constant factor) , i.e.,
\begin{equation} \label{EELMSR_0.13}
\tilde{{\mathfrak A}}_{ps}(D_y)= \tilde{{\mathcal A}}^{(ps)} _{nn} 
\sum ^n_{i=1}{\partial ^2 \over \partial y_i ^2}\;.
\end{equation}
The inverse transformation of variables $y\rightarrow x$ in (\ref{EELMSR_0.13}) and
(\ref{EELMSR_0.13F}) gives
\begin{equation} \label{EELMSR_0.14}
{\mathfrak A}_{ps}(D_x)=b_{ps}{\mathfrak A}_{ss}(D_x)=
b_{ps}\sum ^n_{j,k=1}{\mathcal A}_{jk}^{(ss)}{\partial ^2 \over \partial x_j\partial x_k}
\end{equation}
for all $ p\neq s$. Taking into account the arbitrariness of $s \in \{1,\dots, m \}$,
we arrive at (\ref{diag1}), where $A=((b_{ps}))$ and 
$a^{(s)}_{jk}={\mathcal A}_{jk}^{(ss)}$. The non-singularity of the matrix $A$ follows
from the strong ellipticity of the operator ${\mathfrak A}_0(\partial /\partial x)$.

(ii) {\sl Sufficiency.} By Theorem \ref{T_2} together with Lemma \ref{1A}, and the 
equivalence of the systems
$$
A\;{\rm diag}\{L_1({D_x}),\dots, L_m({D_x}) \}\bs u=\bs 0
$$
and
$$
{\rm diag}\{L_1({D_x}),\dots, L_m({D_x}) \}\bs u=\bs 0\;,
$$
we conclude that representation (\ref{diag1}) is sufficient for the invariance 
of the orthant ${\mathbf R}^m_+(\alpha_1,\dots\alpha_m)$ for the system 
${\mathfrak A}_0({D_x})\bs u=\bs 0$ in ${\mathbb R}^n_+$.
\blankbox

The proof of Theorem \ref{T_4} is quite similar to the proof of Theorem \ref{T_3}
with some distinctions. Namely, the proof of necessity in Theorem \ref{T_4} starts,
by Proposition \ref{P_1} and Lemma \ref{4A}, with representation 
of a unique solution of the Dirichlet problem (\ref{EELMSR_0.02}) in the form
$$
\bs u(x)=\int _{\partial {\mathbb R}^n_{+}} {\Phi}\left ({y-x\over |y-x|}\right )
{x_n\over |y-x|^n}\bs f(y')dy'
$$
instead of representation (\ref{EELMSR_0.10}), where $\Phi $ is diagonal $(m\times m)$-matrix-valued kernel with the only
element $\varphi $ on the main diagonal. So, ${\mathfrak D}_1=\dots={\mathfrak D}_m=\varphi$ in (\ref{EELMSR_0.11})
and, therefore, $u_1=\dots=u_m$. Then, using similar arguments as in the proof 
of Theorem \ref{T_3}, we arrive at the equalities  
$$
{\mathfrak A}_{ps}(D_x)=b_{ps}L(D_x)
$$
instead of (\ref{EELMSR_0.14}), where $p, s \in \{1,\dots,m\}$ and $L(D_x)$ is a 
scalar elliptic operator. 



\end{document}